\newcommand\ie{i.e\@., }
\newcommand{\Wlog}{W.\,l.\,o.\,g.\xspace}
\newcommand{\sse}{\subseteq}
\newcommand{\cN}{\mathcal{N}}
\newcommand{\WP}{\mathrm{WP}}
\newcommand{\marky}{y}
\spnewtheorem{observation}{Observation}{\itshape}{}
\crefname{observation}{Observation}{Observation}
\Crefname{observation}{Observation}{Observation}
\newcommand\footnoteref[1]{\protected@xdef\@thefnmark{\ref{#1}}\@footnotemark}
\begin{document}

\newcommand\relatedversion{}

\title{Geodetic Graphs: Experiments and New Constructions\relatedversion}

\author{Florian Stober \and Armin Wei\ss}
\authorrunning{F.\ Stober \and A.\ Wei\ss}
\institute{FMI, University of Stuttgart, Germany}

\maketitle

\begin{abstract}
  In 1962 Ore initiated the study of geodetic graphs.
  A graph is called geodetic if the shortest path between every pair of vertices is unique.
  In the subsequent years a wide range of papers appeared investigating their peculiar properties.
  Yet, a complete classification of geodetic graphs is out of reach.

  In this work we present an exhaustive search algorithm for enumerating all biconnected geodetic graphs of a given size.
  Using our program, we succeed to find all geodetic graphs with up to 25 vertices and all regular geodetic graphs with up to 32 vertices.
  This leads to the discovery of two new infinite families of geodetic graphs.
\end{abstract}

\section{Introduction}\label{sec:intro}

An undirected graph is called geodetic if for any pair of vertices the shortest path between them is unique.
Research on geodetic graphs began in 1962, when Ore posed the problem of classifying all such graphs~\cite{ore1962theory}.
This goal has been achieved for planar geodetic graphs and geodetic graphs of diameter two~\cite{stemple1974geodetic,stemple1968planar}.
Yet, after decades of active research, a full classification of finite geodetic graphs or any characterization by other means has not been attained.
Recently, there has been renewed interest in the problem, with several publications attacking the problem from different angles~\cite{EtgarN24,wrap108882,frasser2020geodetic} and an application of geodetic graphs in the construction of so-called $f$-edge fault-tolerant distance sensitivity oracles~\cite{BiloCCC0KS23}.

The present work complements this by studying small geodetic graphs in the hope to uncover more of their mysterious properties.
Since there are few known geodetic graphs, the study of small geodetic graphs seems important to us as it might lead to further insights about more general constructions.
Indeed, we discovered two new infinite families of geodetic graphs.
Moreover, we hope that having a complete list of small geodetic graphs readily available will be useful for and inspire future research.
Since a graph is geodetic if and only if each maximal biconnected subgraph is geodetic, the focus is on biconnected geodetic graphs.

\paragraph{Contribution.}
Our main contribution is a complete list of biconnected geodetic graphs with at most 25 vertices.
The scarcity of geodetic graphs is reflected in our results: the list contains only 149 graphs compared to the vast amount of far above $10^{50}$ biconnected graphs of up to 25 vertices.
Most of the geodetic graphs we obtained are subdivisions of smaller geodetic graphs, which were already known to be geodetic.
However, our list includes two new graphs, $F_5$ and $H(2,2,2,0)$, that, to the best of our knowledge, cannot be build by any of the previously known constructions.
We study their relations to existing constructions and succeed to find two new infinite families of geodetic graphs based on them.

To compute our list of geodetic graphs, we use a novel algorithmic approach to the problem of enumerating geodetic graphs.
The main idea behind our algorithm is to enumerate all shortest-path trees and then, by a backtracking search, fill in the structure of the graph with the remaining edges.
After each step, we perform pruning operations based on several theoretical results in order to check whether the graph can still be extended to a geodetic graph.
In addition, we use the graph isomorphism tool \texttt{nauty}~\cite{mckay2014practical} to identify isomorphic branches in the search space.

Finally, we consider the special case of regular graphs.
It has been conjectured, that a finite Cayley graph is geodetic if and only if it is either complete or an odd cycle~\cite[Conjecture 6]{wrap108882}.
This gives new interest in the special case of regular geodetic graphs.
Using our search algorithm, we are able to show that, up to 32 vertices, there are only the previously known geodetic graph: odd cycles, complete graphs, the Petersen graph and one graph with 28 vertices discovered in~\cite{bosak1978geodetic}.

\paragraph{Related Work.}
So far a characterization of geodetic graphs has been achieved for only two classes of graphs.
Stemple and Watkins, both students of Ore, showed that a planar graph is geodetic if and only if each maximal biconnected subgraph is either a single edge, a cycle of odd length, or a geodetic graph homeomorphic to the complete graph $K_4$~\cite{stemple1968planar} (meaning that it is a subdivision of $K_4$).
Stemple also obtained a characterization of geodetic graphs of diameter two, by describing vertex degrees and number of cliques~\cite{stemple1974geodetic}.
Later an equivalent characterization in terms of so-called $\pi$-spaces has been obtained~\cite{scapellato1986geodetic} and explicit constructions using projective and affine planes were discovered~\cite{blokhuis1988geodetic}.

A related question asks which geodetic graphs are homeomorphic to a given geodetic graph.
Starting with a complete graph $K_n$, labelling its vertices with integers and then subdividing each edge according to the sum of labels of its endpoints, we obtain a homeomorphic geodetic graph~\cite{plesnik1977two}.
In fact, this construction gives us all geodetic graphs homeomorphic to $K_n$~\cite{stemple1979geodetic}.
The Widespread Petersen graph $\WP_k$ is obtained from the Petersen graph:
Each edge connecting the inner to the outer pentagon is subdivided $k$ times.
The graph $\WP_k$ is geodetic~\cite{plesnik1977two}; however, there are also other geodetic graphs homeomorphic to the Petersen graph~\cite{frasser2020geodetic}.
So far we have not mentioned a very simple way of obtaining a geodetic graph from a given one:
Subdivide each edge with an even number of vertices~\cite{parthasarathy1982some}.
In fact, that result is a special case of a more general construction:
from a geodetic graph $G$ and a clique cover of the edge set the authors of~\cite{parthasarathy1982some} construct a geodetic graph $G^*$.
However, in the general case, $G^*$ is not homeomorphic to $G$.
Another family $h(m, n, s)$ of geodetic graphs of arbitrary diameter~has been presented by Bosak in~\cite{bosak1978geodetic}.
Nevertheless, these graphs are homeomorphic to diameter-three graphs.
The only known infinite families of geodetic graphs with larger diameter and without degree-two vertices are of diameter 4 and 5~\cite{Bridgland83,SrinivasanOA87,EtgarN24}.

\enlargethispage{\baselineskip}
A different approach than finding infinite families of geodetic graphs is to search for \emph{all} geodetic graphs up to a certain number of vertices.
Very little research has been done in this direction.
Still, as part of an undergraduate research project, John Cu computed the biconnected geodetic graphs with up to 11 vertices~\cite{cu2021length} (see also A337179 and A337178 in OEIS~\cite{oeis}).
However, it is difficult to generalize this approach to larger graphs due to the fact that the number of graphs on $n$ vertices grows like $2^{\Theta(n^2)}$.

\section{Preliminaries}
Throughout this paper we consider only undirected finite simple graphs.
We assume the reader is familiar with the basics of graph theory and use notation similar to~\cite{bollobas2013modern}.
We denote the distance between vertices $u$ and $v$ by $d(u,v)$.
A \emph{geodesic} from $u$ to $v$ is a shortest path.
A graph $G$ is called \emph{geodetic} if for all $u,v \in V$ there is at most one geodesic from $u$ to $v$.
A graph $G$ is \emph{biconnected} if $G$ is connected and for every $v \in V$ the induced subgraph of $V \setminus \{v\}$ is connected.
A graph is geodetic if and only if every maximal biconnected subgraph (block) is geodetic~\cite{stemple1968planar}.
We introduce some basic lemmas used in our search algorithm as well as in the constructions below.

\begin{lemma}[Theorem 3.3 in~\cite{stemple1974geodetic}]
  \label{lem:diamonds}
  Let $G = (V, E)$ be a geodetic graph.
  If four vertices $u, v, w, x \in V$ form a 4-cycle, then they induce a complete subgraph.
\end{lemma}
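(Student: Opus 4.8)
The plan is to look at the two diagonals of the $4$-cycle and argue that geodeticity forces each of them to be an edge. Write the $4$-cycle as $u-v-w-x-u$, so that the edges already present are $uv$, $vw$, $wx$, $xu$, and the only pairs whose adjacency is in question are the ``diagonal'' pairs $\{u,w\}$ and $\{v,x\}$.

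First I would consider the pair $\{u,w\}$. The two paths $u,v,w$ and $u,x,w$ both have length $2$, and they are distinct as paths because the four vertices of the cycle are pairwise distinct (in particular $v\neq x$). Hence $d(u,w)\le 2$. Suppose for contradiction that $uw\notin E$. Then $d(u,w)\ge 2$, so $d(u,w)=2$, and the two length-$2$ paths exhibited above are both geodesics from $u$ to $w$, contradicting the assumption that $G$ is geodetic. Therefore $uw\in E$.

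By the symmetric argument applied to the pair $\{v,x\}$, using the two length-$2$ paths $v,u,x$ and $v,w,x$, we likewise obtain $vx\in E$. Together with the four edges of the cycle this shows that every one of the $\binom{4}{2}=6$ pairs among $u,v,w,x$ is an edge, so these vertices induce a complete subgraph, as claimed.

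I do not expect any real obstacle: the only point needing a word of care is that the two exhibited length-$2$ paths are genuinely different, which is immediate since a $4$-cycle has four distinct vertices. One could equivalently state the core observation as: a geodetic graph contains no pair of vertices at distance $2$ having two common neighbours.
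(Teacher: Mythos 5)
Your proof is correct: the paper itself gives no proof of this lemma, citing it directly as Theorem~3.3 of Stemple's paper, and your two-diagonal argument (each diagonal pair would otherwise be at distance two with two common neighbours, giving two geodesics) is exactly the standard elementary proof of that result. No gaps.
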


\begin{lemma}[Theorem 3.5 in~\cite{stemple1974geodetic}]
  \label{lem:adjacent_part_of_clique}
  Let $G = (V, E)$ be a geodetic graph, $C \subseteq V$ a clique in $G$ and $v$ adjacent to at least two distinct vertices in $C$.
  Then $C \cup \{ v \}$ is a clique.
\end{lemma}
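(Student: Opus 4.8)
The plan is to prove the statement directly, by showing that $v$ is adjacent to every vertex of $C$; since $C$ is already a clique, this is exactly the assertion that $C \cup \{v\}$ is a clique. First I would dispose of the trivial cases: if $v \in C$ there is nothing to prove, and if $\abs{C} \le 2$ the hypothesis already says $v$ is adjacent to all of $C$. So from now on assume $v \notin C$ and $\abs{C} \ge 3$, and fix two \emph{distinct} vertices $a, b \in C$ that are both adjacent to $v$, as guaranteed by the hypothesis.

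The key step is a single application of \cref{lem:diamonds}. Take an arbitrary vertex $c \in C \setminus \{a, b\}$. Because $C$ is a clique, $c$ is adjacent to both $a$ and $b$. Hence the four pairwise distinct vertices $v, a, c, b$ carry the edges $va$, $ac$, $cb$, $bv$, i.e.\ they form a $4$-cycle (in fact a diamond, since $ab$ is a chord). By \cref{lem:diamonds} these four vertices induce a complete subgraph, so in particular $vc \in E$. As $c$ ranged over all of $C \setminus \{a, b\}$, and $v$ is adjacent to $a$ and $b$ by assumption, $v$ is adjacent to every vertex of $C$, which is what we needed.

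I do not expect a real obstacle here; the only point that needs care is ensuring that $v, a, c, b$ are genuinely four distinct vertices, so that the phrase ``$4$-cycle'' is meaningful and \cref{lem:diamonds} applies — this is precisely why we first reduced to the case $v \notin C$ and chose $c \notin \{a,b\}$. Note that the geodetic hypothesis enters the argument only indirectly, through \cref{lem:diamonds}.
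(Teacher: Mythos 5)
Your argument is correct: for each $c \in C \setminus \{a,b\}$ the four distinct vertices $v, a, c, b$ carry the $4$-cycle $(v, a, c, b)$ via the edges $va$, $ac$, $cb$, $bv$, so \cref{lem:diamonds} forces $vc \in E$, and you handle the degenerate cases ($v \in C$, $\abs{C} \le 2$) properly. The paper gives no proof of this lemma — it is quoted directly as Theorem~3.5 of Stemple's paper — and your derivation from \cref{lem:diamonds} is the standard one, so your write-up simply supplies a valid, self-contained justification where the paper relies on a citation.
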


\noindent
We denote by $\cN^r(v) = \{u \mid d(v, u) = r\}$ the set of vertices at distance $r$ from $v$.
A consequence of the previous two lemmas is the following lemma, describing the neighbourhood of a vertex in a geodetic graph.

\begin{lemma}
  \label{lem:partition_neighbours_cliques}
  Let $G = (V, E)$ be a geodetic graph and $v \in V$ a vertex.
  Then $\cN^1(v)$ (the neighbours of $v$) can be partitioned into a set of disjoint cliques.
\end{lemma}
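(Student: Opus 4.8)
The plan is to exhibit the partition as the set of equivalence classes of a suitable relation on $\cN^1(v)$. For $u, w \in \cN^1(v)$, define $u \sim w$ to hold precisely when $u = w$ or $uw \in E$. Since $G$ is undirected and simple, $\sim$ is obviously reflexive and symmetric, so the whole task reduces to proving transitivity. Once this is done, the equivalence classes of $\sim$ are pairwise disjoint, cover $\cN^1(v)$, and each of them is a clique (any two distinct vertices inside one class are $\sim$-related, hence adjacent), which is exactly the assertion.

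To prove transitivity I would take $u, w, x \in \cN^1(v)$ with $u \sim w$ and $w \sim x$ and show $u \sim x$. We may assume $u$, $w$, $x$ are pairwise distinct, as any coincidence among them makes $u \sim x$ immediate. Because $u, w, x$ are neighbours of $v$ and $G$ has no loops, $v$ is distinct from all three, so $u$, $v$, $x$, $w$ are four distinct vertices. Now the edges $uv$ and $vx$ exist since $u, x \in \cN^1(v)$, and the edges $wu$ and $xw$ exist by $u \sim w$ and $w \sim x$; hence the vertices $u, v, x, w$, in this cyclic order, form a 4-cycle. By \cref{lem:diamonds} these four vertices induce a complete subgraph, and in particular $ux \in E$, i.e. $u \sim x$. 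This establishes transitivity and with it the lemma.

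I expect no real obstacle here; the only point that needs a moment's care is ensuring that the four vertices fed into \cref{lem:diamonds} are genuinely distinct, which is why the degenerate cases among $u$, $w$, $x$ have to be cleared away first. As an alternative route avoiding \cref{lem:diamonds}, one can argue straight from \cref{lem:adjacent_part_of_clique}: for each neighbour $u$ of $v$, extend $\{v, u\}$ to a maximal clique; two maximal cliques through $v$ sharing a further vertex $u \neq v$ could, by applying \cref{lem:adjacent_part_of_clique} twice, be merged into a single clique, contradicting maximality, so the maximal cliques containing $v$ with $v$ removed already form the desired partition of $\cN^1(v)$.
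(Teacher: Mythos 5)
Your proof is correct and follows essentially the route the paper intends: the paper states this lemma without a written proof, merely noting it is a consequence of \cref{lem:diamonds} and \cref{lem:adjacent_part_of_clique}, and both your main argument (transitivity of adjacency on $\cN^1(v)$ via the 4-cycle $u,v,x,w$ and \cref{lem:diamonds}) and your alternative via maximal cliques through $v$ and \cref{lem:adjacent_part_of_clique} correctly fill in that derivation. The care you take with the distinctness of the four vertices is exactly the point that needs checking, and it goes through.
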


\begin{lemma}[Unique Predecessor Theorem~\cite{parthasarathy1982some}]
  \label{lem:unique_predessesor}
  The graph $G = (V, E)$ with diameter $d$ is geodetic if and only if for every $v \in V$ each point of $\cN^{r}(v)$ has a unique neighbour in $\cN^{r-1}(v)$ for each $r$ with $2 \le r \le d$.
\end{lemma}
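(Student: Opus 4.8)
The plan is to derive both directions of the equivalence from a single structural fact about geodesics. Call a neighbour $w$ of $u$ a \emph{predecessor} of $u$ with respect to $v$ if $u \in \cN^r(v)$ and $w \in \cN^{r-1}(v)$. The key observation is that if $P = (v, x_1, \dots, x_{r-1}, u)$ is any geodesic from $v$ to a vertex $u \in \cN^r(v)$, then every prefix of $P$ is again a geodesic; in particular $d(v, x_{r-1}) = r-1$, so the vertex $x_{r-1}$ immediately preceding $u$ lies in $\cN^{r-1}(v)$ and is therefore a predecessor of $u$. Hence every geodesic from $v$ to $u$ arises by taking a geodesic from $v$ to some predecessor $w \in \cN^{r-1}(v) \cap \cN^1(u)$ and appending the edge $wu$. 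This sets up a correspondence between geodesics from $v$ to $u$ and pairs $(w, Q)$, where $w$ is a predecessor of $u$ and $Q$ is a geodesic from $v$ to $w$.

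For the forward direction, I would assume $G$ is geodetic and suppose, for contradiction, that some $u \in \cN^r(v)$ with $r \ge 2$ has two distinct predecessors $w_1 \ne w_2$ in $\cN^{r-1}(v)$. Choosing for each $w_i$ a geodesic $P_i$ from $v$ (of length $r-1$) and appending the edge $w_i u$ yields in each case a path from $v$ to $u$ of length $r = d(v,u)$, hence a geodesic. Because $w_1$ and $w_2$ are distinct vertices immediately preceding $u$, these two geodesics differ, contradicting the uniqueness of geodesics. Therefore each vertex of $\cN^r(v)$ has a unique predecessor for every $2 \le r \le d$.

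For the backward direction, I would assume the unique-predecessor condition, fix $v$, and prove by induction on $r$ that each $u \in \cN^r(v)$ is joined to $v$ by a unique geodesic; quantifying over all $v$ then gives uniqueness for every pair of vertices, \ie geodeticity. The cases $r = 0$ (the empty path) and $r = 1$ (the edge $vu$ is the only path of length one) are immediate. For $r \ge 2$, the observation above shows that every geodesic from $v$ to $u$ ends with an edge $wu$ for some predecessor $w \in \cN^{r-1}(v)$; by assumption such $w$ is unique, and by the induction hypothesis the geodesic from $v$ to $w$ is unique, so the geodesic from $v$ to $u$ is uniquely determined.

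The argument is largely routine bookkeeping, and I expect no serious obstacle. The one point that needs care is the interplay between the indexing of the hypothesis ($r \ge 2$) and the base of the induction: the case $r = 1$ must be handled separately, since there the only predecessor lies in $\cN^0(v) = \{v\}$ and uniqueness is automatic. The sole piece of genuine content is the prefix-of-a-geodesic observation, which simultaneously guarantees that a predecessor always exists and that the decomposition of a geodesic into a sub-geodesic plus a final edge is exhaustive; once that is in place, both directions follow formally.
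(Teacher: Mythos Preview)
Your proof is correct. The paper does not supply its own proof of this lemma; it simply quotes it as the Unique Predecessor Theorem from~\cite{parthasarathy1982some}, so there is nothing to compare against beyond noting that your argument is exactly the standard one.
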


\noindent
\cref{lem:unique_predessesor} leads to the notion of a shortest-path tree:
Let $G$ be a connected geodetic graph and $r \in V$.
Let $T \subseteq E$ be the set of all edges occurring on some shortest path from $r$ to another vertex (\ie we take the union of all shortest paths starting at $r$).
From \Cref{lem:unique_predessesor} it follows that the graph $(V,T)$ is a tree~-- the \emph{shortest-path tree} rooted at $r$ (we also call simply $T$ the shortest-path tree when there is no risk of confusion).
We can assign a \emph{level} to each vertex $v \in V$, which is given by its distance from the root $r$.
Another consequence of \Cref{lem:unique_predessesor} is that, other than $T$, the edge set of $G$ contains only edges between vertices on the same level of $T$.
%A less immediate consequence is as follows.

%\enlargethispage{3\baselineskip}
\begin{lemma}
  \label{lem:forbidden-edges}
  Let $G$ be a geodetic graph with a shortest-path tree $T$.
  If there is an edge $\{u, v\}$ between vertices on the same level, then for no $u'$ in the subtree rooted at $u$ and $v'$ in the subtree  rooted at  $v$ there is an edge $\{u', v'\}$.
\end{lemma}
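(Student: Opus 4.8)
The plan is to argue by contradiction. Suppose $T$ is rooted at $r$, that $u,v$ share a level, call it $\ell$, that $\{u,v\}\in E$, and that there is an edge $\{u',v'\}$ with $u'$ in the subtree rooted at $u$, $v'$ in the subtree rooted at $v$, and $(u',v')\neq(u,v)$ (the pair $(u,v)$ itself trivially realises such an edge, so it must be excluded). Since $u\neq v$ lie on the same level, the subtrees of $T$ rooted at $u$ and at $v$ are disjoint; in particular neither of $u',v'$ is an ancestor of the other, so $\{u',v'\}$ is not a tree edge, and by the remark following \cref{lem:unique_predessesor} it joins two vertices on a common level, say level $\ell+k$. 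If $k=0$ then $u'=u$ and $v'=v$, which we excluded, so $k\ge 1$. As $u$ (resp.\ $v$) is an ancestor of $u'$ (resp.\ $v'$), the downward tree paths $u\rightsquigarrow u'$ and $v\rightsquigarrow v'$ have length $k$; combined with $d(u,u')\ge d(r,u')-d(r,u)=k$ (and symmetrically for $v,v'$), these tree paths are geodesics and $d(u,u')=d(v,v')=k$.

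Next I would control $d(v,u')$. The edge $\{u,v\}$ followed by the tree path $u\rightsquigarrow u'$ is a walk from $v$ to $u'$ of length $k+1$, and the tree path $v\rightsquigarrow v'$ followed by the edge $\{v',u'\}$ is another walk from $v$ to $u'$ of length $k+1$; these two walks are distinct because their second vertices differ ($u$ sits on level $\ell$, whereas the first internal vertex of $v\rightsquigarrow v'$ sits on level $\ell+1$, using $k\ge 1$). On the other hand $d(v,u')\ge d(r,u')-d(r,v)=k$. Hence if $d(v,u')$ were $k+1$ both walks would be geodesics, contradicting that $G$ is geodetic; so $d(v,u')=k$.

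It remains to rule out $d(v,u')=k$, which I expect to be the crux. Fix a geodesic $v=c_0,c_1,\dots,c_k=u'$. Since every suffix of a geodesic is a geodesic, $d(c_i,u')=k-i$ for each $i$. I would prove by induction that $c_i$ lies on level $\ell+i$ and, for $i\ge 1$, that $c_i$ is a child of $c_{i-1}$ in $T$: given $c_i$ on level $\ell+i$, the vertex $c_{i+1}$ is a neighbour of $c_i$ and hence within one level of it, while $d(r,c_{i+1})\ge d(r,u')-d(c_{i+1},u')=\ell+i+1$ forces $c_{i+1}$ onto level exactly $\ell+i+1$; an edge between consecutive levels is a tree edge, so $c_{i+1}$ is a child of $c_i$. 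Then $v=c_0,\dots,c_k=u'$ is a downward path in $T$, so $u'$ lies in the subtree rooted at $v$ --- contradicting that it lies in the disjoint subtree rooted at $u$. The main obstacle is precisely this step: one has to exclude a ``shortcut'' geodesic from $v$ to $u'$, and the key observation making it work is that distances to the root force every edge of such a geodesic to descend one level in $T$.
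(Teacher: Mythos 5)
Your proof is correct and follows essentially the same route as the paper's: the two same-level edges together with the two downward tree paths yield two distinct paths of length $k+1$ between a top vertex and the opposite bottom vertex, uniqueness of geodesics then forces a distance of exactly $k$, and a level-counting argument rules that out. The only difference is one of care: where the paper tersely asserts that a path of length at most $d$ would require an edge skipping levels, you spell out the crux — such a geodesic must descend one level per step, hence consist of tree edges, placing $u'$ in the wrong subtree — and you also correctly note that the pair $(u',v')=(u,v)$ must be excluded from the statement.
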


\begin{proof}
	First, observe that, if $u'$ and $v'$ are on different levels of $T$, an edge $\{u', v'\}$ would contradict the fact that $T$ is a shortest-path tree of a geodetic graph.
    Now, let $d$ be the length of the path $u - u'$ in the shortest-path tree.
    Then $d$ is also the length of the path $v - v'$ in the shortest-path tree.
    Now there are two paths from $u$ to $v'$ of length $d + 1$.
    Since the shortest path must be unique, it can have length at most $d$, but that is not possible without introducing at least one additional edge between two vertices on different levels of the shortest-path tree.\qed
\end{proof}

A \emph{cut} $C = (S, T)$ is a partition of $V$ into two subsets $S$ and $T$.
The corresponding \emph{cut set} is the subset of edges with one vertex in $S$ and the other one in $T$.
The following proposition is an important special case of the construction presented in~\cite{hic1985construction}.
Indeed, it covers exactly the case that~\cite{hic1985construction} leads to a subdivision of the original graph.
For an example of a corresponding geodetic subdivision, see~\Cref{subsec:joining-two-odd-cycles}.

\begin{restatable}{proposition}{propsubdivcut}
  \label{prop:subdivcut}
  Let $G = (V, E)$ be a geodetic graph and $C=(S,T)$ be a cut of $G$ with the following properties:
  \vspace{-2mm}
  \begin{enumerate}
    \item $S$ and $T$ are geodetically closed (meaning that for all $u,v\in S$ the shortest path from $u$ to $v$ does not leave $S$).
    \item\label{subdivcutCondII} Given two edges $\{u, v\}$ and $\{w, x\}$ from the cut set with $u,w \in S$, $v,x \in T$, then $d(u, w) + d(v, x)$ is odd.
    \vspace{-2mm}
  \end{enumerate}
  Then, for every $k$, subdividing the edges in the cut set by adding $k$ vertices to each results in a geodetic graph $G'$.
\end{restatable}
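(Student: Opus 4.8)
The plan is to verify the characterisation of \cref{lem:unique_predessesor}: $G'$ is geodetic iff for every root $v$ and every vertex $z$ with $d'(v,z)\ge 2$ the vertex $z$ has a unique neighbour at distance $d'(v,z)-1$ from $v$, where $d'$ (resp.\ $d$) denotes distance in $G'$ (resp.\ $G$). We may assume $k\ge 1$, as $G'=G$ for $k=0$. Call the vertices of $S$ and of $T$ the $S$-side and the $T$-side of $G'$, and call the $k$ new vertices inserted into a cut edge $e=\{w,x\}$ (with $w\in S$, $x\in T$) the \emph{inner} vertices of $e$, numbered $a^e_1,\dots,a^e_k$ starting from $w$.

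The first step is to pin down the distances between original vertices. Since $S$ is geodetically closed in the geodetic graph $G$, the subgraph $G[S]$ is itself geodetic with $d_{G[S]}=d|_{S\times S}$, and likewise for $T$; moreover every $G$-geodesic between an $S$-side and a $T$-side vertex crosses the cut exactly once, because a second crossing would exhibit a sub-geodesic joining two vertices of $S$ that passes through $T$. Transporting this to $G'$: in any path of $G'$ joining two original vertices $p,q$, each subdivided cut edge is either traversed in full or not entered at all, and contracting those traversed in full produces a $p$--$q$ path of $G$ shorter by exactly $k$ per contracted edge; hence a $p$--$q$ path crossing the cut $c$ times has length at least $d(p,q)+ck$, which for $k\ge 1$ is minimal only when $c$ equals its parity-forced minimum ($0$ for same-side pairs, $1$ for opposite-side pairs) and the contracted path is a $G$-geodesic. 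With the obvious upper bounds this gives $d'(p,q)=d(p,q)$ for same-side pairs and $d'(p,q)=d(p,q)+k$ for opposite-side pairs, each realised by a unique $G'$-geodesic, namely the unique $G$-geodesic with its single cut edge (if any) replaced by the corresponding subdivided path. From these formulas one checks directly that the unique-predecessor condition holds for every root lying in $S$ or in $T$, using only geodetic closedness and geodeticity of $G$.

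The second step reduces the vertices on subdivided cut edges to the first step. If $z=a^e_i$ is inner on $e=\{w,x\}$, then every $G'$-geodesic through $z$ runs monotonically along $e$ towards $w$ or towards $x$; hence, for any $v$ not inner on $e$, $d'(v,a^e_i)=\min\{\,i+d'(v,w)\,,\ (k+1-i)+d'(v,x)\,\}$, and one checks that the minimising branch is attained by an honest, non-backtracking path — the only subtlety, a geodesic re-entering $e$, makes that branch non-minimal. Iterating at both arguments, every distance of $G'$ becomes a minimum over at most four \emph{routes}, each of the form ``walk along one subdivided cut edge $+$ an already understood distance between original vertices ($+$ walk along a second subdivided cut edge)''; since distinct routes are distinct paths and (by the first step) a route determines its path, $G'$ is geodetic precisely when this minimum is uniquely attained in every instance. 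Via \cref{lem:unique_predessesor}, what remains is the case $v=a^{e_0}_m$, $z=a^{e'}_j$ with $e_0\ne e'$: one must rule out that the route through the two $S$-endpoints $w_0,w'$ and the route through the two $T$-endpoints $x_0,x'$ both have minimal length, and likewise for the ``crossed'' pair of routes through $w_0,x'$ and through $x_0,w'$.

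This tie-breaking is where condition~\ref{subdivcutCondII} is used, and I expect it to be the main obstacle. The ties in which the two routes reach $z$ on the same side of $e'$, or enter and leave $e_0$ on the same side, are already impossible by geodetic closedness together with the bounds $1\le m,j\le k$: equating the two route lengths forces either a short-distance identity that would route a geodesic between two vertices of $S$ (or of $T$) through the opposite side, or a numerically impossible equation among the offsets. The remaining two ties are precisely what condition~\ref{subdivcutCondII} forbids: equating the length of the route through $w_0,w'$ with that of the route through $x_0,x'$, the offsets cancel in pairs and one is left with $d(w_0,w')+d(x_0,x')$ even, contradicting condition~\ref{subdivcutCondII} applied to $e_0$ and $e'$; and for the crossed pair, a short length count (using $w_0\!\sim\! x_0$ and $w'\!\sim\! x'$) shows the two routes can be simultaneously minimal only when $k=1$, where condition~\ref{subdivcutCondII} applied again to $e_0,e'$ forces the relevant distance sum to be odd and once more rules out the tie. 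The delicate parts will be the offset bookkeeping in the four route lengths, the check that every candidate minimiser is a simple path rather than a walk that doubles back along a subdivided edge, and the boundary situations — $k=1$ and inner vertices adjacent to an endpoint of their cut edge — where both hypotheses are used.
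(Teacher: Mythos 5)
Your proposal is correct and, underneath the unique-predecessor/route framing, follows essentially the same argument as the paper's proof: the same four-way case split (both endpoints in $S$ or both in $T$; one in each; one original vertex and one subdivision vertex; two subdivision vertices on different cut edges), with geodetic closedness supplying the key inequality $d(u,x)\le d(u,y)$ for $u,x\in S$, $y\in T$, $\{x,y\}$ in the cut set, and condition~2 supplying the odd cycle that breaks the tie between the two ``straight'' routes in the last case. One small correction to your sketch: condition~2 as stated constrains $d(w_0,w')+d(x_0,x')$ but says nothing about the crossed sum $d(w_0,x')+d(x_0,w')$, so it cannot be ``applied again'' there — however this does not matter, since each crossed route exceeds the corresponding straight route by at least $2k+1-2j\ge 1$ (resp.\ $2j-1\ge 1$) once you use the geodetic-closedness inequality, so crossed routes are never minimal for any $k\ge 1$ and no appeal to condition~2 is needed for them.
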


By~\cite[Corollary 1]{GorovoyZ22}, every biconnected non-complete geodetic graph containing a clique of size $m$ also contains $K_{1,m}$ (complete bipartite graph with partitions of size $1$ and $m$) as induced subgraph.
Here, we need a similar but slightly different statement, which can be shown by modifying the proof of~\cite[Corollary 1]{GorovoyZ22}.

\begin{restatable}{proposition}{cliqueindepset}
  \label{lem:clique_independentset_bicon}
  Let $G = (V, E)$ be a biconnected geodetic graph containing a clique $C$.
  Then $G$ is either complete or every vertex $u \in C$ has a neighbour $x_u$ outside of $C$ such that $\{ x_u \mid u \in C \}$ is an independent set of size $|C|$.
\end{restatable}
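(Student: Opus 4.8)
The plan is to mimic the strategy behind \cite[Corollary 1]{GorovoyZ22}, but to produce, for each $u \in C$, an explicit neighbour $x_u \notin C$ and then argue that the collection $\{x_u\}$ is independent. First I would handle the trivial degeneracies: if $G$ is complete we are done, so assume $G$ is not complete; by \Cref{lem:adjacent_part_of_clique} we may enlarge $C$ to a \emph{maximal} clique (any vertex adjacent to two vertices of $C$ must join it), and it suffices to prove the statement for this maximal clique, since a sub-collection of an independent set indexed by $C$ is still independent of size $|C|$ once we restrict. So assume from now on $C$ is a maximal clique. Since $G$ is biconnected and not equal to $C$ (if $V=C$ then $G=K_{|C|}$ is complete), every vertex of $C$ has at least one neighbour outside $C$: indeed a vertex $u\in C$ with all neighbours inside $C$ would have $C$ as a connected component of $G$ after deleting any other vertex of $C$ only if $|C|\ge 2$... more carefully, biconnectedness forbids $C$ from being separated from the rest by a single vertex, which forces at least two vertices of $C$ to have outside neighbours; combined with \Cref{lem:adjacent_part_of_clique} and maximality one argues every $u\in C$ has an outside neighbour.

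Next, for each $u \in C$ I would \emph{choose} an outside neighbour $x_u$. The key structural input is \Cref{lem:adjacent_part_of_clique}: any outside vertex $x$ adjacent to $u\in C$ is adjacent to \emph{exactly one} vertex of $C$ (namely $u$), for otherwise $C\cup\{x\}$ would be a clique, contradicting maximality of $C$. Hence the neighbourhoods-outside-$C$ of distinct vertices of $C$ are pairwise disjoint, which already guarantees the $x_u$ are \emph{distinct}; the remaining task is to choose them so that no two are adjacent. Suppose $x_u$ and $x_w$ were adjacent for $u\neq w$ in $C$. Then $u, x_u, x_w, w$ is a $4$-cycle (using $u\sim w$ in $C$, $u \sim x_u$, $w\sim x_w$, $x_u \sim x_w$, and noting $x_u\neq w$, $x_w\neq u$ since the $x$'s are outside $C$), so by \Cref{lem:diamonds} these four vertices induce $K_4$; in particular $x_u \sim w$, contradicting the fact that $x_u$ is adjacent to exactly one vertex ($u$) of $C$. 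Therefore \emph{any} choice of outside neighbours $x_u$ automatically yields an independent set of size $|C|$ — the $4$-cycle/diamond lemma does all the work.

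Let me sanity-check that nothing forces a collision I've overlooked: if $x_u = x_w$ for $u \ne w$ that outside vertex would be adjacent to two vertices of $C$, impossible by maximality + \Cref{lem:adjacent_part_of_clique}; and if $x_u \sim x_w$, the diamond argument above gives a contradiction. So distinctness and independence both reduce to the same two structural lemmas plus maximality. The only genuinely delicate point — and the step I expect to be the main obstacle — is establishing that \emph{every} vertex of $C$ (not just two of them) has a neighbour outside $C$; the biconnectivity-and-non-complete hypothesis gives this, but the argument needs to be run carefully, and this is precisely where the proof deviates from \cite[Corollary 1]{GorovoyZ22}, which only needs the weaker conclusion $K_{1,|C|}$. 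I would argue it as follows: let $U\subseteq C$ be the set of vertices of $C$ all of whose neighbours lie in $C$. If $u\in U$ and $|C|\ge 2$, then $C\setminus\{u'\}$ for any other $u'\in C$ would still have to connect $u$ to $V\setminus C$, forcing $u$'s neighbour set to meet $V\setminus C$ — contradiction — once one checks $V\setminus C\neq\emptyset$ and that $G$ restricted appropriately stays connected; working this out shows $U=\emptyset$ unless $G=K_{|C|}$, completing the proof.
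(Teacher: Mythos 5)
Your reduction to a maximal clique and your independence argument are fine --- indeed, the observation that any adjacency $x_u \sim x_w$ creates the $4$-cycle $u,x_u,x_w,w$, which by \Cref{lem:diamonds} must be a $K_4$ and hence by \Cref{lem:adjacent_part_of_clique} would absorb $x_u$ into the maximal clique, is a clean and correct way to get both distinctness and independence, arguably more direct than the paper's. The genuine gap is exactly where you suspect it: the claim that \emph{every} vertex of the maximal clique has a neighbour outside it. Your argument for this step is wrong. Biconnectivity only rules out a single cut vertex separating $C$ from $V\setminus C$, so it yields at least \emph{two} vertices of $C$ with outside neighbours, not all of them. Your sketch ``$C\setminus\{u'\}$ for any other $u'\in C$ would still have to connect $u$ to $V\setminus C$, forcing $u$'s neighbour set to meet $V\setminus C$'' does not hold: if $|C|\ge 3$, deleting one vertex $u'$ leaves $u$ connected to $V\setminus C$ through the \emph{other} clique vertices, so no contradiction arises. (A $K_4$ with an external path attached to two of its vertices is biconnected and has two clique vertices with no outside neighbour; that graph is not geodetic, which shows the geodetic hypothesis, not biconnectivity, must carry this step.)

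The paper closes this gap with an argument you have no substitute for: using biconnectivity it finds a path outside the clique $H$ between two clique vertices, tracks the distance $d(v_i,H)$ and the witness sets $S_i$ along that path, and shows by a case analysis on consecutive vertices (exploiting uniqueness of geodesics) that some vertex $v_*$ has at least two nearest witnesses in $H$; a second geodetic argument then upgrades this to $d(v_*,u)=d\ge 2$ for \emph{all} $u\in H$, and the first edge of each geodesic from $u$ to $v_*$ supplies the outside neighbour $x_u$. Without something of this sort, your proof establishes the conclusion only for those clique vertices that happen to have outside neighbours, which is strictly weaker than the proposition.
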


\section{Enumerating Geodetic Graphs Using \texttt{nauty}}

John Cu computed the biconnected geodetic graphs with up to 11 vertices as part of his undergraduate research project~\cite{cu2021length}.
 In this section, we reproduce his results by a computer search using \texttt{nauty}, a popular graph isomorphism library by B.~McKay~\cite{mckay2014practical}.
Here we rely on \texttt{nauty}'s \texttt{geng} utility: we generate all biconnected graphs using \cref{lem:diamonds} for pruning and then check for each one whether it is geodetic.
We have written a simple function to check whether a graph is geodetic shown in \Cref{lst:check_geodetic} in the appendix.
That function can be baked right into \texttt{nauty}'s \texttt{geng} tool by compiling it with \texttt{-DPRUNE=check\_geodetic}.

We run \texttt{geng} with the \texttt{-C} argument to only generate biconnected graphs.
Spreading the workload to 24 threads, we were able to enumerate all biconnected geodetic graphs with up to 16 vertices in a couple of days.
For $n=16$, there were approximately $0.14\cdot 10^{12}$ intermediate and $1.65\cdot 10^{12}$ final graphs for which the prune function was called.
These numbers suggest that this approach is infeasible for larger values of $n$.

As with any proof relying on exhaustive computer search, there is a possibility that we might have missed some geodetic graph due to a coding or hardware error.
Using \texttt{nauty} gives us confidence in our results.
We wrote only 40 lines of code ourselves, which is still feasible to check for correctness.
Additionally we rely on the correctness of the \texttt{nauty} package.

\section{A Custom Search Algorithm for Enumerating Geodetic Graphs}
\label{subsec:custom-search-algorithm}

To efficiently enumerate geodetic graphs with more than $16$ vertices, we need a different approach than above.
Instead of enumerating all graphs and checking whether they are geodetic, we start with a tree.
The idea is that the tree could be a shortest-path tree of a geodetic graph rooted at a vertex $r$ of maximum degree.
If it is, we can obtain that graph by adding more edges to the tree.
There are far fewer trees than graphs, so enumerating all trees is more feasible.
We use the algorithm from~\cite{li1999advantages} to generate all rooted trees.
Next, we describe our backtracking algorithm for extending a tree to a geodetic graph.

\vspace{-3mm}
\subsubsection*{Overview.}
To extend a tree to a geodetic graph, we implement a backtracking search to add additional edges.
The search traverses the vertices of the tree in level order.
By \cref{lem:forbidden-edges}, all non shortest-path tree edges must be between vertices on the same level, so it is sufficient to consider those.
Among the edges between vertices on the same level, we distinguish sibling edges and non-sibling edges.
A \emph{sibling edge} is an edge between two vertices that share the same parent.
We call vertices sharing the same parent \emph{siblings}.
As a consequence of this distinction, the search alternates between two steps, each being applied to an entire level of the tree: partitioning sibling vertices into cliques and generating non-sibling edges.
\Cref{fig:example-search-space-small} illustrates the two steps.
At any point of time, each vertex has one of the following three states~-- with the stated invariants for the subsequent recursive steps.
\vspace{-2mm}
\begin{description}
  \item[unprocessed] Additional incident edges may still be added.
  \item[semi-complete] Only non-sibling edges can still be added, incident sibling edges will not change anymore.
  \item[complete] No additional incident edges can be added.
\end{description}

\vspace{-5mm}
\paragraph*{Step 1: Partition sibling vertices into cliques.}
This step is motivated by \cref{lem:partition_neighbours_cliques}, which states that the neighbours of each vertex in a geodetic graph are partitioned into disjoint cliques.
The algorithm recursively goes through all sets of siblings on the level.
For a given set of siblings, each partition is checked by adding the edges from the partition and continuing recursively with the next set of siblings.
After fixing a partition, the vertices go from the unprocessed to the semi-complete state.

\vspace{-2mm}
\paragraph*{Step 2: Generate non-sibling edges.}
Once the first step is complete for all vertices on a level, the algorithm recursively traverses the vertices of that same level to generate the non-sibling edges.
For the traversal we assume an order among vertices on the same level.
For each vertex, the algorithm recurses on all combinations of possible incident edges on the same level.
That is, for each combination, add the edges, mark the vertex as completed and recursively continue the search with the next vertex in level-order.
At that stage only non-sibling edges that connect to vertices not yet in the complete state are considered possible.
Note that, since the first step has been completed for the entire level, every vertex on the level is either in the semi-complete or complete state.
Moreover, the vertices in complete state precede the vertices in the semi-complete state.

\begin{figure}[t]
  \centering
  \input{graphs/example-search-space-small}
  \vspace{-1mm}\caption{\small Excerpt of the search space. In the first depicted step sibling edges for the three vertices on the bottom right are generated. Note that the two other ways to insert a single sibling edge yield graphs isomorphic to the middle one. In the second step non-sibling neighbours of the vertex on the bottom left are generated. A vertex in the completed state is represented by a filled circle, semi-complete by a half-full circle.\vspace{-2mm}}
  \label{fig:example-search-space-small}
\end{figure}

\enlargethispage{\baselineskip}
\vspace{-3mm}
\subsubsection{Implementation Details and Optimizations.}
For generating all trees we use the algorithm from~\cite{li1999advantages}.
We have implemented the algorithm in Rust.
A link to the source code can be found in \cref{sec:source-code}.
In \cref{app:running_times} we present running times of our algorithm and the effect of the different pruning techniques.
We take advantage of parallelization by distributing the trees to different threads.
We store graphs as adjacency matrices using a bit vector (in our case each line is stored in a single 32-bit integer).
In particular, we use the same format as \texttt{nauty}, so that we can perform isomorphism checks efficiently without converting the format.
We use the following pruning techniques to reduce the search space.

\vspace{-2mm}
\paragraph*{Maximum Vertex Degree.}
Recall that we assume the root of the tree to be a vertex of maximum degree in the geodetic graph.
Thus, the degree of the root is an upper bound for the degree of any other vertex.
To take advantage of this, when generating sibling edges, we immediately discard partitions that violate the maximum degree of some vertex.
When generating non-sibling edges, we use this to limit the number of edges added to a vertex in a recursion step and we do not add edges to another vertex that already has the maximum degree.

\vspace{-2mm}
\paragraph*{Non-Clique Neighbour.}
By \Cref{lem:clique_independentset_bicon}, every clique vertex has a neighbour that is not part of the clique.
Thus, when generating the non-sibling edges of a vertex that is part of a clique with its parent and some of its siblings and has no other edges, we make sure to add at least one edge.

\vspace{-2mm}
\paragraph*{Forbidden Edges.}
\Cref{lem:forbidden-edges} states that, whenever there is an edge between two vertices on the same level, there cannot be certain edges on the levels below.
To take advantage of this, we maintain a set of forbidden edges in our implementation.
The forbidden edges are not considered when generating the non-sibling edges. % incident to a vertex.

\vspace{-2mm}
\paragraph*{Isomorphism Tests.}
The tree and the generated graphs often have large automorphism groups.
Generating the same graph multiple times has a significant impact on the performance.
In our implementation we use two strategies to avoid generating certain isomorphic graphs in the first place:
First, when generating sibling edges, we only consider non-isomorphic partitions (with respect to isomorphism classes of the corresponding subtrees).
Second, when generating non-sibling edges we assume that vertices that are children of the same parent and have isomorphic subtrees are ordered by degree.
Since these only prevent a fraction of isomorphic graphs being generated, we additionally implemented explicit isomorphism checks using \texttt{nauty}.
We added those checks after each recursive generation step.
These are realized using a cache of fixed size, which is the only part of the algorithm that requires a substantial amount of memory.
For each step of the generation process we use a separate cache each containing $5000000$ entries.
In total one thread uses up to 40 GB.

\vspace{-2mm}
\paragraph*{Biconnectivity.}
As we are only interested in biconnected graphs, one pruning step is to test whether the final graph can still be made biconnected.
To do so, we take the set of edges that has already been generated and add all the edges that could still be generated (excluding those in the forbidden edge set) and test whether that graph is biconnected.

\vspace{-2mm}
\paragraph*{Cycles of length 4 and 6.}
We check whether cycles of length 4 and 6 conform to one of the configurations possible in a geodetic graph.
For cycles of length 4 these are derived from \Cref{lem:diamonds}.
The possible configurations in which a 6-cycle can appear have been classified by Stemple~\cite{stemple1974geodetic}.
We describe the pruning step in more detail in~\Cref{app:small-cycles}.

\section{Experimental Results}
\enlargethispage{\baselineskip}

\begin{table}[t]
  \centering
  \footnotesize
  \caption{\small Number of geodetic graphs.}
  \label{tab:number_geodetic_graphs}

%  \vspace{-0.2cm}
  \setlength{\multicolsep}{0pt}
  \setlength{\columnsep}{0pt}
    \setlength{\tabcolsep}{2pt}
  \begin{multicols}{2}
    \begin{tabular}{rrrr}
      \toprule
      \footnotesize \textbf{Vertices} & \footnotesize\textbf{Total} & \footnotesize\textbf{Connected} & \footnotesize\textbf{Bicon.} \\
      \midrule
   \footnotesize    1                 & \footnotesize 1              & \footnotesize 1                  & \footnotesize --                    \\
   \footnotesize    2                 & \footnotesize 2              & \footnotesize 1                  & \footnotesize 1                    \\
   \footnotesize    3                 & \footnotesize 4              & \footnotesize 2                  & \footnotesize 1                    \\
   \footnotesize    4                 & \footnotesize 9              & \footnotesize 4                  & \footnotesize 1                    \\
   \footnotesize    5                 & \footnotesize 21             & \footnotesize 10                 & \footnotesize 2                    \\
   \footnotesize    6                 & \footnotesize 52             & \footnotesize 23                 & \footnotesize 1                    \\
   \footnotesize    7                 & \footnotesize 138            & \footnotesize 66                 & \footnotesize 3                    \\
   \footnotesize    8                 & \footnotesize 384            & \footnotesize 185                & \footnotesize 1                    \\
   \footnotesize    9                 & \footnotesize 1146           & \footnotesize 586                & \footnotesize 3                    \\
   \footnotesize    10                & \footnotesize 3563           & \footnotesize 1880               & \footnotesize 4                    \\
   \footnotesize    11                & \footnotesize 11599          & \footnotesize 6360               & \footnotesize 3                    \\
   \footnotesize    12                & \footnotesize 39013          & \footnotesize 21975              & \footnotesize 1                    \\
   \footnotesize    13                & \footnotesize 135359         & \footnotesize 78230              & \footnotesize 9                    \\
    \end{tabular}
    \columnbreak

    \begin{tabular}{rrrr}
      \toprule
      \footnotesize\textbf{Vertices} & \footnotesize\textbf{Total} & \footnotesize\textbf{Connected} & \footnotesize\textbf{Bicon.} \\
      \midrule
\footnotesize      14                &\footnotesize 480489         &\footnotesize 283087             &\footnotesize 2                    \\
\footnotesize      15                &\footnotesize 1741210        &\footnotesize 1043329            &\footnotesize 4                    \\
\footnotesize      16                &\footnotesize 6413809        &\footnotesize 3895505            &\footnotesize 8                    \\
\footnotesize      17                &\footnotesize 23966313       &\footnotesize 14726263           &\footnotesize 6                    \\
\footnotesize      18                &\footnotesize 90633268       &\footnotesize 56234210           &\footnotesize 5                    \\
\footnotesize      19                &\footnotesize 346364720      &\footnotesize 216719056          &\footnotesize 13                   \\
\footnotesize      20                &\footnotesize 1335756420     &\footnotesize 841857211          &\footnotesize 3                    \\
\footnotesize      21                &\footnotesize 5192999602     &\footnotesize 3293753840         &\footnotesize 13                   \\
\footnotesize      22                &\footnotesize 20333304508    &\footnotesize 12969219563        &\footnotesize 19                   \\
\footnotesize      23                &\footnotesize                &\footnotesize                    &\footnotesize 11                   \\
\footnotesize      24                &\footnotesize                &\footnotesize                    &\footnotesize 3                    \\
\footnotesize      25                &\footnotesize                &\footnotesize                    &\footnotesize 33                   \\
      \bottomrule
    \end{tabular}
  \end{multicols}
\end{table}

Using our algorithm we enumerated all biconnected geodetic graphs with $n$ vertices for $n \le 25$. This took 16 days of computation time of which almost 15 days were spent for $n=25$ using 24 threads.
Thus, we are able to confirm the results Cu and Elder obtained for $n \le 11$~\cite{cu2021length}.
\Cref{tab:number_geodetic_graphs} shows the number of geodetic graphs with $n$ vertices (we did not compute the numbers of connected geodetic graphs for $n \geq 23$ because there are too many of them).
The number of connected geodetic graph is listed as A337179 in OEIS, the number of biconnected geodetic graphs as A337178~\cite{oeis}.
A list of all the biconnected graphs discovered by our computer search is given in \Cref{tab:graphs}.
There, the graphs $G_c(p)$, $G_d(p)$ and $G_e(p)$ denote the constructions given in~\cite{blokhuis1988geodetic} in Sections 2c--2e and $p$ is the order of the projective/affine plane the construction is based on.
Unless noted otherwise, subdivision refers to \Cref{prop:subdivcut}.
The columns describe properties of the graphs:
$r$ denotes the radius ($\min_{u\in V}\max_{v\in V} d(u, v) $), $d$ the diameter and $\delta$ the minimum vertex degree.
The ``re.'' and ``ha.'' columns state whether the graph is regular (resp.~hamiltonian).
By $\operatorname{Aut}$ we denote the automorphism group of the graph using the following notation:
$C_n$ is the cyclic group of order $n$, $D_{2n}$ the dihedral group of order $2n$ and $S_n$ the symmetric group on $n$ elements.
A semidirect product is denoted by $\rtimes$.

As expected, our search found all the known geodetic biconnected graphs with up to 25 vertices -- that is complete graphs and the graphs yielded by the constructions from~\cite{blokhuis1988geodetic,bosak1978geodetic,parthasarathy1982some}.
Most of the graphs found by our search are subdivisions of other geodetic graphs (in particular, complete graphs).
Moreover, as foreseen by \cite{plesnik1977two,stemple1979geodetic}, there are certain values $n$ for which we found many such subdivisions and other values $n$ that do not admit any such subdivision such as $n=12,14$, or $24$ (moreover, for $n = 18,20$ there are also no subdivisions of complete graphs but there are subdivisions of other smaller geodetic graphs).
The most exciting discovery are two graphs, $F_5$ and $H(2,2,2,0)$, which to the best of our knowledge have not appeared in the literature before nor can be build using one of the previously known constructions.
We will use these in the next section, two derive two new families of geodetic graphs.
Finally, note here that, except the complete graphs, the Petersen graph and four graphs described in~\cite{blokhuis1988geodetic}, all graphs have minimum vertex degree two.
Moreover, the only regular graphs in the table are the Petersen graph, cycles and complete graphs.

\begin{table}[t]
	\caption{\small List of biconnected geodetic graphs.}
	\label{tab:graphs}
	\begin{minipage}{\textwidth}
		\centering
		\enlargethispage{2\baselineskip}
		\small
		\setlength{\multicolsep}{0pt}
    \setlength{\columnsep}{0pt}
		\setlength{\tabcolsep}{1.0pt}
		\begin{multicols}{2}
			\begin{tabular}{rp{62pt}rrrp{9pt}p{9pt}p{40pt}}
				\toprule
				$n$ & \textbf{Graph} & \textbf{$r$} & $d$ & $\delta$ & re. & ha. & $\operatorname{Aut}$\\
				\midrule
				$2$ & $K_2$ & $1$ & $1$ & $1$ & \marky &  & $C_{2}$\\\hline
				$3$ & $C_3$ & $1$ & $1$ & $2$ & \marky & \marky & $S_{3}$\\\hline
				$4$ & $K_4$ & $1$ & $1$ & $3$ & \marky & \marky & $S_{4}$\\\hline
				\multirow{2}{*}{$5$} & $C_5$ & $2$ & $2$ & $2$ & \marky & \marky & $D_{10}$\\
				& $K_5$ & $1$ & $1$ & $4$ & \marky & \marky & $S_{5}$\\\hline
				$6$ & $K_6$ & $1$ & $1$ & $5$ & \marky & \marky & $S_{6}$\\\hline
				\multirow{3}{*}{$7$} & $C_7$ & $3$ & $3$ & $2$ & \marky & \marky & $D_{14}$\\
				& Subdiv.\ of $K_4$ & $2$ & $2$ & $2$ &  &  & $S_{3}$\\
				& $K_7$ & $1$ & $1$ & $6$ & \marky & \marky & $S_{7}$\\\hline
				$8$ & $K_8$ & $1$ & $1$ & $7$ & \marky & \marky & $S_{8}$\\\hline
				\multirow{3}{*}{$9$} & $C_9$ & $4$ & $4$ & $2$ & \marky & \marky & $D_{18}$\\
				& Subdiv.\ of $K_5$ & $2$ & $2$ & $2$ &  &  & $S_{4}$\\
				& $K_9$ & $1$ & $1$ & $8$ & \marky & \marky & $S_{9}$\\\hline
				\multirow{3}{*}{$10$} & \multicolumn{7}{l}{2 Subdiv.\ of $K_4$}  \\
				& Petersen Graph & $2$ & $2$ & $3$ & \marky &  & $S_{5}$\\
				& $K_{10}$ & $1$ & $1$ & $9$ & \marky & \marky & $S_{10}$\\\hline
				\multirow{3}{*}{$11$} & $C_{11}$ & $5$ & $5$ & $2$ & \marky & \marky & $D_{22}$\\
				& Subdiv.\ of $K_6$ & $2$ & $2$ & $2$ &  &  & $S_{5}$\\
				& $K_{11}$ & $1$ & $1$ & $10$ & \marky & \marky & $S_{11}$\\\hline
				$12$ & $K_{12}$ & $1$ & $1$ & $11$ & \marky & \marky & $S_{12}$\\\hline
				\multirow{6}{*}{$13$} & $C_{13}$ & $6$ & $6$ & $2$ & \marky & \marky & $D_{26}$\\
				& \multicolumn{7}{l}{3 Subdiv.\ of $K_4$}  \\
				& \multicolumn{7}{l}{2 Subdiv.\ of $K_5$}  \\
				& Subdiv.\ of $K_7$ & $2$ & $2$ & $2$ &  &  & $S_{6}$\\
				& $G_d(2) = G_e(3)$ & $2$ & $2$ & $3$ &  & \marky & $S_{4}$\\
				& $K_{13}$ & $1$ & $1$ & $12$ & \marky & \marky & $S_{13}$\\\hline
				\multirow{2}{*}{$14$} & $h(3, 2, 0)$ & $3$ & $3$ & $2$ &  &  & $D_{12}$\\
				& $K_{14}$ & $1$ & $1$ & $13$ & \marky & \marky & $S_{14}$\\\hline
				\multirow{4}{*}{$15$} & $C_{15}$ & $7$ & $7$ & $2$ & \marky & \marky & $D_{30}$\\
				& Subdiv.\ of $K_8$ & $2$ & $2$ & $2$ &  &  & $S_{7}$\\
				& $\WP_3$ & $3$ & $3$ & $2$ &  &  & $C_{5} \rtimes C_{4}$\\
				& $K_{15}$ & $1$ & $1$ & $14$ & \marky & \marky & $S_{15}$\\\hline
				\multirow{3}{*}{$16$} & \multicolumn{7}{l}{5 Subdiv.\ of $K_4$}  \\
				& \multicolumn{7}{l}{2 Subdiv.\ of $K_6$}  \\
				& $K_{16}$ & $1$ & $1$ & $15$ & \marky & \marky & $S_{16}$\\\hline
				\multirow{4}{*}{$17$} & $C_{17}$ & $8$ & $8$ & $2$ & \marky & \marky & $D_{34}$\\
				& \multicolumn{7}{l}{3 Subdiv.\ of $K_{5}$}  \\
				& Subdiv.\ of $K_9$ & $2$ & $2$ & $2$ &  &  & $S_{8}$\\
				& $K_{17}$ & $1$ & $1$ & $16$ & \marky & \marky & $S_{17}$\\\hline
				\multirow{4}{*}{$18$} & \multicolumn{7}{l}{2 Subdiv.\ of $h(3, 2, 0)$}  \\
				& $h(4, 2, 0)$ & $3$ & $3$ & $2$ &  &  & $C_{2} \times S_{4}$\\
				& $h(3, 3, 0)$ & $3$ & $3$ & $2$ &  &  & $S_{3} \times S_{3}$\\
				& $K_{18}$ & $1$ & $1$ & $17$ & \marky & \marky & $S_{18}$\\\hline
				\multirow{4}{*}{$19$} & $C_{19}$ & $9$ & $9$ & $2$ & \marky & \marky & $D_{38}$\\
				& \multicolumn{7}{l}{6 Subdiv.\ of $K_4$} \\
				& \multicolumn{7}{l}{2 Subdiv.\ of $K_7$} \\
				& Subdiv.\ of $K_{10}$ & $2$ & $2$ & $2$ &  &  & $S_{9}$\\[1mm]
			\end{tabular}

			\begin{tabular}{rp{62pt}rrrp{9pt}p{9pt}p{40pt}}
				\toprule
				$n$ & \textbf{Graph} & \textbf{$r$} & $d$ & $\delta$ & re. & ha. & $\operatorname{Aut}$\\
				\midrule
        \multirow{3}{*}{$19$} & Subdiv.\ of $G_d(2)$ & $3$ & $3$ & $2$ &  &  & $S_{4}$\\
        & $F_3 = K_5^*$ & $4$ & $5$ & $2$ &  &  & $D_{8}$\\
        & $K_{19}$ & $1$ & $1$ & $18$ & \marky & \marky & $S_{19}$\\\hline
				\multirow{4}{*}{$20$} & Subdiv.\ of Petersen Graph\footnote{\label{fn:subdiv}Not explained by \cref{prop:subdivcut}.} & $4$ & $4$ & $2$ &  &  & $D_{8}$\\
				& $\WP_4$ & $4$ & $4$ & $2$ &  &  & $C_{5} \rtimes C_{4}$\\
				& $K_{20}$ & $1$ & $1$ & $19$ & \marky & \marky & $S_{20}$\\\hline
				\multirow{9}{*}{$21$} & $C_{21}$ & $10$ & $10$ & $2$ & \marky & \marky & $D_{42}$\\
				& 5 {subdiv.} of $K_5$ &  \\
				& \multicolumn{7}{l}{3 Subdiv.\ of $K_6$}  \\
				& Subdiv.\ of $K_{11}$ & $2$ & $2$ & $2$ &  &  & $S_{10}$\\
				& $G_c(3)$ & $2$ & $2$ & $4$ &  & \marky & $(C_3 \!\times\! C_3) \rtimes \mathrm{GL}(2,3)$ \\
				& $G_e(4)$ & $2$ & $2$ & $4$ &  & \marky & $S_{5}$\\
				& $K_{21}$ & $1$ & $1$ & $20$ & \marky & \marky & $S_{21}$\\\hline
				\multirow{6}{*}{$22$} & \multicolumn{7}{l}{9 Subdiv.\ of $K_4$}  \\
				& \multicolumn{7}{l}{2 Subdiv.\ of $K_8$}  \\
				& \multicolumn{7}{l}{5 Subdiv.\ of $h(3, 2, 0)$}  \\
				& $h(3, 4, 0)$ & $3$ & $3$ & $2$ &  &  & $S_{3} \times S_{4}$\\
				& $h(5, 2, 0)$ & $3$ & $3$ & $2$ &  &  & $C_{2} \times S_{5}$\\
				& $K_{22}$ & $1$ & $1$ & $21$ & \marky & \marky & $S_{22}$\\\hline
				\multirow{9}{*}{$23$} & $C_{23}$ & $11$ & $11$ & $2$ & \marky & \marky & $D_{46}$\\
				& Subdiv.\ of $K_{12}$ & $2$ & $2$ & $2$ &  &  & $S_{11}$\\
				& \multicolumn{7}{l}{2 Subdiv.\ of $h(4, 2, 0)$}  \\
				& \multicolumn{7}{l}{2 Subdiv.\ of $h(3, 3, 0)$}  \\
				& Subdiv.\ of $F_3$ & $5$ & $6$ & $2$ &  &  & $C_{2}$\\
				& $h(4, 3, 0)$ & $3$ & $3$ & $2$ &  &  & $S_{3} \times S_{4}$\\
				& $h(3, 2, 1)$ & $5$ & $5$ & $2$ &  &  & $D_{12}$\\
				& $F_5$ & $5$ & $5$ & $2$ &  &  & $D_{8}$\\
				& $K_{23}$ & $1$ & $1$ & $22$ & \marky & \marky & $S_{23}$\\\hline
				\multirow{3}{*}{$24$} & $H(2,2,2,0)$ & $4$ & $4$ & $2$ &  &  & $C_{2} \times A_{4}$ \\
				& $K_6^*$~\cite{parthasarathy1982some}\footnote{\label{fn:star}The construction also depends on a so-called $\mathcal{O}$-cover of $K_6$~-- therefore, different graphs can arise as $K_6^*$.} & $4$ & $5$ & $2$ &  &  & $S_4$ \\
				& $K_{24}$ & $1$ & $1$ & $23$ & \marky & \marky & $S_{24}$\\\hline
				\multirow{14}{*}{$25$} & $C_{25}$ & $12$ & $12$ & $2$ & \marky & \marky & $D_{50}$ \\
				& 11 Subdiv.\ of $K_4$ & \\
				& 7 Subdiv.\ of $K_5$ & \\
				& 3 Subdiv.\ of $K_7$ & \\
				& 2 Subdiv.\ of $K_9$ & \\
				& Subdiv.\ of $K_{13}$ & $2$ & $2$ & $2$ &  &  & $S_{12}$ \\
				& \multicolumn{7}{l}{2 Subdiv.\ of Petersen Graph\footnoteref{fn:subdiv}} \\
				& Subdiv.\ of $G_d(2)$ & $4$ & $4$ & $2$ &  &  & $S_4$ \\
				& $\WP_5$ & $5$ & $5$ & $2$ &  &  & $C_5 \rtimes C_4$ \\
				& $K_6^*$~\cite{parthasarathy1982some}\footnoteref{fn:star} & $4$ & $5$ & $2$ &  &  & $D_{12}$ \\
				& Pulling\footnote{The pulling operation has been introduced by Plesnik~\cite{plesnik1984construction}. See \Cref{sec:pull}.} of\newline $K_6(1,1,1,1,1,0)$ & 3 & 5 & 2 &  &  & $D_8$ \\
				& $G_d(3)$ & $2$ & $2$ & $4$ &  & \marky & $(C_3 \!\times\! C_3) \rtimes \mathrm{GL}(2,3)$ \\
				& $K_{25}$ & $1$ & $1$ & $24$ & \marky & \marky & $S_{25}$ \\
				\bottomrule
			\end{tabular}
		\end{multicols}
	\end{minipage}
\end{table}

\vspace{-2mm}
\paragraph*{Regular geodetic graphs.}
Odd cycles and complete graphs are trivially geodetic.
Beyond those there are the strongly regular Moore graphs of diameter two:
the Petersen graph, the Hoffman-Singleton graph and possibly a graph of order 3250~\cite{hoffman1960moore}.
In addition, a cubic block of order 28 is presented in~\cite{bosak1978geodetic}.
We used our algorithm to confirm that there are no further regular geodetic graphs with at most 32 vertices.

\section{New Constructions of Geodetic Graphs}\label{subsec:joining-two-odd-cycles}
\enlargethispage{\baselineskip}

In this section, we present two new infinite families of geodetic graphs discovered with the help of our computer search.

\vspace{-2mm}
\paragraph*{Joining Three Cliques.}\label{sec:joining_cliques}

Before describing our first family $H(m, n, p, s)$ of geodetic graphs, we look at the family $h(m, n, s)$ described in~\cite{bosak1978geodetic}.
It will be the starting point for our own construction.
The parameters $m$, $n$ and $s$ are integers with $m, n \ge 2$ and $s \ge 0$.
The graph $h(m, n, s)$ is constructed from a $K_m$, a $K_n$ and $m$ copies of a star with $n$ leaves.
The subgraphs are joined by paths of length $s + 1$ as can be seen in \Cref{fig:family-h}.

\begin{figure}[t]
  \centering

  \begin{subfigure}[b]{0.3\textwidth}
    \centering
    \begin{tikzpicture}[style=thick,xscale=0.6,yscale=0.8,rotate=90]

  \draw (0,1.75) node[ellipse, draw, thick, fill=black!10, minimum width=25mm, minimum height=1mm, inner sep=1mm, align=center] {};
  \draw (7,1.75) node[ellipse, draw, thick, fill=black!10, minimum width=25mm, minimum height=1mm, inner sep=1mm, align=center] {};
  \node at (0, -1) {$K_m$};
  \node at (7, -1) {$K_n$};
  \draw [thick,decoration={brace,raise=-0.4cm,mirror},decorate,color=black] (0.8,-1) -- (2.2,-1) node [pos=0.5,yshift=0.0cm,rotate=90] {\small $s$ vertices};
  \draw [thick,decoration={brace,raise=-0.25cm,mirror},decorate,color=black] (4.3,-1) -- (5.7,-1) node [pos=0.5,yshift=0.0cm,rotate=90] {\small $s$ vertices};
  \foreach \y in {0,2,3.5}{
    \fill (0,\y) circle[shift only,radius=2pt];
    \fill (1,\y) circle[shift only,radius=2pt];
    \fill (2,\y) circle[shift only,radius=2pt];
    \fill (3,\y) circle[shift only,radius=2pt];
    \fill (7,\y) circle[shift only,radius=2pt];
    \draw (0,\y) -- (1.2,\y);
    \draw (1.8,\y) -- (3,\y);
    \foreach \yy in {-0.35,0.05,0.35}{
      \draw (3, \y) -- (3.5,\y+\yy);
      \fill (3.5,\y+\yy) circle[shift only,radius=2pt];
      \fill (4.5,\y+\yy) circle[shift only,radius=2pt];
      \fill (5.5,\y+\yy) circle[shift only,radius=2pt];
      \draw (3.5,\y+\yy) -- (4.7,\y+\yy);
      \draw (5.3,\y+\yy) -- (5.5,\y+\yy);
      \draw (5.5,\y+\yy) -- (7,\yy*5+1.75);
      \node[rotate=90] at (5,\y+\yy) {$\cdots$};
    }
    \node[rotate=90] at (4,\y-0.15) {$\cdots$};
    \node[rotate=90] at (5,\y-0.15) {$\cdots$};
    \node[rotate=90] at (1.5,\y) {$\cdots$};
  }
  \node[rotate=90] at (1.5,1) {$\cdots$};
  \node[rotate=90] at (3.25,1) {$\cdots$};
\end{tikzpicture}
    \caption{\small $h(m, n, s)$}
    \label{fig:family-h}
  \end{subfigure}
  \hfill
  \begin{subfigure}[b]{0.68\textwidth}
    \centering
    \begin{tikzpicture}[style=thick,scale=0.4]

  \begin{scope}[rotate=60]
    \draw (0,5.8) node[transform shape, ellipse, draw, thick, fill=black!10, minimum width=5mm, minimum height=40mm, inner sep=1mm, align=center] {};
    \node at (0, 8.3) {$K_m$};
  \end{scope}

  \begin{scope}[rotate=180]
    \draw (0,5.8) node[transform shape, ellipse, draw, thick, fill=black!10, minimum width=5mm, minimum height=40mm, inner sep=1mm, align=center] {};
    \node at (0, 8.3) {$K_p$};
  \end{scope}

  \begin{scope}[rotate=-60]
    \draw (0,5.8) node[transform shape, ellipse, draw, thick, fill=black!10, minimum width=5mm, minimum height=40mm, inner sep=1mm, align=center] {};
    \node at (0, 8.3) {$K_n$};
  \end{scope}

  \begin{scope}[xshift=-3.5cm,yshift=2.03cm]
    \draw [thick,decoration={brace,raise=-0.4cm},decorate,color=black] (0.8,4.8) -- (2.2,4.8) node [pos=0.5,yshift=0.0cm] {\small $s$ vertices};
    \draw [thick,decoration={brace,raise=-0.25cm},decorate,color=black] (4.3,4.8) -- (5.7,4.8) node [pos=0.5,yshift=0.0cm] {\small $s$ vertices};
    \fill (0,0) circle[shift only,radius=2pt];
    \fill (150:2cm) circle[shift only,radius=2pt];
    \fill (150:3.5cm) circle[shift only,radius=2pt];
    \fill ($(7,0)+(30:0cm)$) circle[shift only,radius=2pt];
    \fill ($(7,0)+(30:2cm)$) circle[shift only,radius=2pt];
    \fill ($(7,0)+(30:3.5cm)$) circle[shift only,radius=2pt];
    \foreach \y in {0,2,3.5}{
      \fill (1,\y) circle[shift only,radius=2pt];
      \fill (2,\y) circle[shift only,radius=2pt];
      \fill (3,\y) circle[shift only,radius=2pt];
      \draw (150:\y cm) -- (1,\y);
      \draw (1,\y) -- (1.2,\y);
      \draw (1.8,\y) -- (3,\y);
      \foreach \yy/\zz in {-0.45/-0.35,0.05/0.05,0.45/0.35}{
        \draw (3, \y) -- (3.5,\y+\yy);
        \fill (3.5,\y+\yy) circle[shift only,radius=2pt];
        \fill (4.5,\y+\yy) circle[shift only,radius=2pt];
        \fill (5.5,\y+\yy) circle[shift only,radius=2pt];
        \draw (3.5,\y+\yy) -- (4.7,\y+\yy);
        \draw (5.3,\y+\yy) -- (5.5,\y+\yy);
        \draw (5.5,\y+\yy) -- ($(7,0)+(30:\zz*5cm+1.75cm)$);
        \node[transform shape] at (5,\y+\yy) {$\cdots$};
      }
      \node[transform shape] at (4,\y-0.15) {$\cdots$};
      \node[transform shape] at (5,\y-0.15) {$\cdots$};
      \node[transform shape] at (1.5,\y) {$\cdots$};
    }
    \node[transform shape] at (1.5,1) {$\cdots$};
    \node[transform shape] at (3.25,1) {$\cdots$};
  \end{scope}

  \begin{scope}[rotate=120,xshift=-3.5cm,yshift=2.03cm]
    \draw [thick,decoration={brace,raise=-0.4cm},decorate,color=black] (0.8,4.8) -- (2.2,4.8) node [rotate=-60,pos=0.5,yshift=0.0cm] {\small $s$ vertices};
    \draw [thick,decoration={brace,raise=-0.25cm},decorate,color=black] (4.3,4.8) -- (5.7,4.8) node [rotate=-60,pos=0.5,yshift=0.0cm] {\small $s$ vertices};
    \fill (0,0) circle[shift only,radius=2pt];
    \fill (150:2cm) circle[shift only,radius=2pt];
    \fill (150:3.5cm) circle[shift only,radius=2pt];
    \fill ($(7,0)+(30:0cm)$) circle[shift only,radius=2pt];
    \fill ($(7,0)+(30:2cm)$) circle[shift only,radius=2pt];
    \fill ($(7,0)+(30:3.5cm)$) circle[shift only,radius=2pt];
    \foreach \y in {0,2,3.5}{
      \fill (1,\y) circle[shift only,radius=2pt];
      \fill (2,\y) circle[shift only,radius=2pt];
      \fill (3,\y) circle[shift only,radius=2pt];
      \draw (150:\y cm) -- (1,\y);
      \draw (1,\y) -- (1.2,\y);
      \draw (1.8,\y) -- (3,\y);
      \foreach \yy/\zz in {-0.45/-0.35,0.05/0.05,0.45/0.35}{
        \draw (3, \y) -- (3.5,\y+\yy);
        \fill (3.5,\y+\yy) circle[shift only,radius=2pt];
        \fill (4.5,\y+\yy) circle[shift only,radius=2pt];
        \fill (5.5,\y+\yy) circle[shift only,radius=2pt];
        \draw (3.5,\y+\yy) -- (4.7,\y+\yy);
        \draw (5.3,\y+\yy) -- (5.5,\y+\yy);
        \draw (5.5,\y+\yy) -- ($(7,0)+(30:\zz*5cm+1.75cm)$);
        \node[transform shape] at (5,\y+\yy) {$\cdots$};
      }
      \node[transform shape] at (4,\y-0.15) {$\cdots$};
      \node[transform shape] at (5,\y-0.15) {$\cdots$};
      \node[transform shape] at (1.5,\y) {$\cdots$};
    }
    \node[transform shape] at (1.5,1) {$\cdots$};
    \node[transform shape] at (3.25,1) {$\cdots$};
  \end{scope}

  \begin{scope}[rotate=-120,xshift=-3.5cm,yshift=2.03cm]
    \draw [thick,decoration={brace,raise=-0.4cm},decorate,color=black] (0.8,4.8) -- (2.2,4.8) node [rotate=60,pos=0.5,yshift=0.0cm] {\small $s$ vertices};
    \draw [thick,decoration={brace,raise=-0.25cm},decorate,color=black] (4.3,4.8) -- (5.7,4.8) node [rotate=60,pos=0.5,yshift=0.0cm] {\small $s$ vertices};
    \fill (0,0) circle[shift only,radius=2pt];
    \fill (150:2cm) circle[shift only,radius=2pt];
    \fill (150:3.5cm) circle[shift only,radius=2pt];
    \fill ($(7,0)+(30:0cm)$) circle[shift only,radius=2pt];
    \fill ($(7,0)+(30:2cm)$) circle[shift only,radius=2pt];
    \fill ($(7,0)+(30:3.5cm)$) circle[shift only,radius=2pt];
    \foreach \y in {0,2,3.5}{
      \fill (1,\y) circle[shift only,radius=2pt];
      \fill (2,\y) circle[shift only,radius=2pt];
      \fill (3,\y) circle[shift only,radius=2pt];
      \draw (150:\y cm) -- (1,\y);
      \draw (1,\y) -- (1.2,\y);
      \draw (1.8,\y) -- (3,\y);
      \foreach \yy/\zz in {-0.45/-0.35,0.05/0.05,0.45/0.35}{
        \draw (3, \y) -- (3.5,\y+\yy);
        \fill (3.5,\y+\yy) circle[shift only,radius=2pt];
        \fill (4.5,\y+\yy) circle[shift only,radius=2pt];
        \fill (5.5,\y+\yy) circle[shift only,radius=2pt];
        \draw (3.5,\y+\yy) -- (4.7,\y+\yy);
        \draw (5.3,\y+\yy) -- (5.5,\y+\yy);
        \draw (5.5,\y+\yy) -- ($(7,0)+(30:\zz*5cm+1.75cm)$);
        \node[transform shape] at (5,\y+\yy) {$\cdots$};
      }
      \node[transform shape] at (4,\y-0.15) {$\cdots$};
      \node[transform shape] at (5,\y-0.15) {$\cdots$};
      \node[transform shape] at (1.5,\y) {$\cdots$};
    }
    \node[transform shape] at (1.5,1) {$\cdots$};
    \node[transform shape] at (3.25,1) {$\cdots$};
  \end{scope}
\end{tikzpicture}
    \caption{\small $H(m, n, p, s)$}
    \label{fig:family-H}
  \end{subfigure}
  \caption{\small The family $h(m, n, s)$~\cite{bosak1978geodetic} and the newly discovered family $H(m, n, p, s)$.}
  \label{fig:family-H-h}
\end{figure}

The graph $H(m, n, p, s)$ is obtained by taking an $h(m, n, s)$, an $h(n, p, s)$ and an $h(p, m, s)$.
The subgraphs are joined together by identifying the cliques.
For instance, each vertex of the $K_n$ in $h(m, n, s)$ is identified with the corresponding vertex of the $K_n$ in $h(n, p, s)$.
The construction is illustrated in \Cref{fig:family-H}.

\vspace{-2mm}
\paragraph*{Joining Two Odd Cycles.}
%\label{subsec:joining-two-odd-cycles}

For an odd integer $k \ge 3$ we construct the graph $F_k$ by taking two copies of $C_k$, labelling their vertices $u_1, \dots, u_k$, respectively $v_1, \dots, v_k$, and adding
\vspace{-2mm}
\begin{itemize}
	\item a vertex $b$ connected to both $u_{(k+1)/2}$ and $v_{(k+1)/2}$,
	\item a vertex $s_1$ connected to $u_1$ and a vertex $s_2$ connected to $u_k$,
	\item vertices $t_1$ and $t_2$, the first connected to $v_1$, the second to $v_k$ and
	\item four paths of length three connecting both $s_1 $and $s_2$ to both $t_1$ and $t_2$.
\end{itemize}
%\noindent
The graphs $F_3$ and $F_5$ are shown in \Cref{fig:Fk}.
%We have checked those to be geodetic.
The graph $F_3$ coincides with a graph obtained from a $K_5$ using the star operation presented in~\cite{parthasarathy1982some}.

\begin{figure}[t]
	\centering
	\begin{subfigure}[b]{0.49\textwidth}
		\centering
		\begin{tikzpicture}[style=thick]

  \begin{scope}[xshift=-1.5cm]
    \foreach \x in {-90,30,150} {
      \draw (\x:0.4) -- (\x+120:0.4);
    }
    \fill (30:0.4) circle[shift only,radius=2pt] node[right] {$u_1$};
    \fill (-90:0.4) circle[shift only,radius=2pt] node[below] {$u_2$};
    \fill (150:0.4) circle[shift only,radius=2pt] node[left] {$u_3$};
  \end{scope}

  \begin{scope}[xshift=1.5cm]
    \foreach \x in {-90,30,150} {
      \draw (\x:0.4) -- (\x+120:0.4);
    }
    \fill (30:0.4) circle[shift only,radius=2pt] node[right] {$v_1$};
    \fill (-90:0.4) circle[shift only,radius=2pt] node[below] {$v_2$};
    \fill (150:0.4) circle[shift only,radius=2pt] node[left] {$v_3$};
  \end{scope}

  \coordinate (s1) at (-1.2,1.3);
  \coordinate (s2) at (-1,0.6);
  \coordinate (t2) at (1,0.6);
  \coordinate (t1) at (1.2,1.3);

  \draw (s1) -- (t1);
  \draw (s1) -- (t2);
  \draw (s2) -- (t1);
  \draw (s2) -- (t2);

  \draw (s1) -- ($(-1.5cm,0)+(150:0.4)$);
  \draw (s2) -- ($(-1.5cm,0)+(30:0.4)$);
  \draw (t2) -- ($(1.5cm,0)+(150:0.4)$);
  \draw (t1) -- ($(1.5cm,0)+(30:0.4)$);

  \fill (s1) circle[shift only,radius=2pt] node[above] {$s_1$};
  \fill (s2) circle[shift only,radius=2pt] node[above] {$s_2$};
  \fill (t1) circle[shift only,radius=2pt] node[above] {$t_1$};
  \fill (t2) circle[shift only,radius=2pt] node[above] {$t_2$};

  \fill ($(s1)!0.333!(t1)$) circle[shift only,radius=2pt];
  \fill ($(s1)!0.666!(t1)$) circle[shift only,radius=2pt];
  \fill ($(s1)!0.333!(t2)$) circle[shift only,radius=2pt];
  \fill ($(s1)!0.666!(t2)$) circle[shift only,radius=2pt];
  \fill ($(s2)!0.333!(t1)$) circle[shift only,radius=2pt];
  \fill ($(s2)!0.666!(t1)$) circle[shift only,radius=2pt];
  \fill ($(s2)!0.333!(t2)$) circle[shift only,radius=2pt];
  \fill ($(s2)!0.666!(t2)$) circle[shift only,radius=2pt];

  \fill (0, -0.7) circle[shift only,radius=2pt] node[above] {$b$};
  \draw ($(1.5cm,0)+(-90:0.4)$) -- (0, -0.7) -- ($(-1.5cm,0)+(-90:0.4)$);
\end{tikzpicture}
		\caption{\small $F_3$}
	\end{subfigure}
	\hfill
	\begin{subfigure}[b]{0.49\textwidth}
		\centering
		\begin{tikzpicture}[style=thick]

  \begin{scope}[xshift=-1.5cm]
    \foreach \x in {-18,-90,-162,-234,-306} {
      \draw (\x:0.4) -- (\x+72:0.4);
    }
    \fill (-18:0.4) circle[shift only,radius=2pt] node[right] {$u_2$};
    \fill (-90:0.4) circle[shift only,radius=2pt] node[below] {$u_3$};
    \fill (-162:0.4) circle[shift only,radius=2pt] node[left] {$u_4$};
    \fill (-234:0.4) circle[shift only,radius=2pt] node[left] {$u_5$};
    \fill (-306:0.4) circle[shift only,radius=2pt] node[right] {$u_1$};
  \end{scope}

  \begin{scope}[xshift=1.5cm]
    \foreach \x in {-18,-90,-162,-234,-306} {
      \draw (\x:0.4) -- (\x+72:0.4);
    }
    \fill (-18:0.4) circle[shift only,radius=2pt] node[right] {$v_2$};
    \fill (-90:0.4) circle[shift only,radius=2pt] node[below] {$v_3$};
    \fill (-162:0.4) circle[shift only,radius=2pt] node[left] {$v_4$};
    \fill (-234:0.4) circle[shift only,radius=2pt] node[left] {$v_5$};
    \fill (-306:0.4) circle[shift only,radius=2pt] node[right] {$v_1$};
  \end{scope}

  \draw[dashed,thin] (-2.2,0.09) -- (2.2,0.09);

  \coordinate (s1) at (-1.2,1.3);
  \coordinate (s2) at (-1,0.6);
  \coordinate (t2) at (1,0.6);
  \coordinate (t1) at (1.2,1.3);

  \draw (s1) -- (t1);
  \draw (s1) -- (t2);
  \draw (s2) -- (t1);
  \draw (s2) -- (t2);

  \draw (s1) -- ($(-1.5cm,0)+(-234:0.4)$);
  \draw (s2) -- ($(-1.5cm,0)+(-306:0.4)$);
  \draw (t2) -- ($(1.5cm,0)+(-234:0.4)$);
  \draw (t1) -- ($(1.5cm,0)+(-306:0.4)$);

  \fill (s1) circle[shift only,radius=2pt] node[above] {$s_1$};
  \fill (s2) circle[shift only,radius=2pt] node[above] {$s_2$};
  \fill (t1) circle[shift only,radius=2pt] node[above] {$t_1$};
  \fill (t2) circle[shift only,radius=2pt] node[above] {$t_2$};

  \fill ($(s1)!0.333!(t1)$) circle[shift only,radius=2pt];
  \fill ($(s1)!0.666!(t1)$) circle[shift only,radius=2pt];
  \fill ($(s1)!0.333!(t2)$) circle[shift only,radius=2pt];
  \fill ($(s1)!0.666!(t2)$) circle[shift only,radius=2pt];
  \fill ($(s2)!0.333!(t1)$) circle[shift only,radius=2pt];
  \fill ($(s2)!0.666!(t1)$) circle[shift only,radius=2pt];
  \fill ($(s2)!0.333!(t2)$) circle[shift only,radius=2pt];
  \fill ($(s2)!0.666!(t2)$) circle[shift only,radius=2pt];

  \fill (0, -0.7) circle[shift only,radius=2pt] node[above] {$b$};
  \draw ($(1.5cm,0)+(-90:0.4)$) -- (0, -0.7) -- ($(-1.5cm,0)+(-90:0.4)$);
\end{tikzpicture}
		\caption{\small $F_5$}
	\end{subfigure}
  \caption{\small The family $F_k$. The dashed line on the right represents the cut used to obtain any $F_k$ ($k > 5$) from $F_5$ using \Cref{prop:subdivcut}.\vspace{-2mm}}
  \label{fig:Fk}
\end{figure}

\enlargethispage{\baselineskip}

\section{Conclusion and Open Questions}

We implemented an exhaustive computer search in order to find all geodetic graphs with up to 25 vertices. Moreover, we used our algorithm to find all regular geodetic graphs with up to 32 vertices.
A full classification of geodetic graphs remains the aim for future research. We want to conclude this work by mentioning a few questions for further research in the area.

\vspace{-2mm}
\paragraph*{Growth Rate of Biconnected Geodetic Graphs.}

Let $g(n)$ denote the number of biconnected geodetic graphs on $n$ vertices. Looking at the numbers for $g(n)$ with $n\leq 25$ in \cref{tab:number_geodetic_graphs}, gives rise to a natural question about the growth rate of $g(n)$: it seems to grow slowly but there are points (for $n= 20, 24$) where there are almost no biconnected geodetic graphs.
Yet, for certain values of $n$ there is a super-polynomial lower bound:
\begin{observation}\label{obs:lowerbound}%[Fact]
	$g(k^2) \geq e^{2\sqrt{k}}/14$.
\end{observation}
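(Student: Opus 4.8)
The plan is to exhibit $p(k)$ pairwise non-isomorphic biconnected geodetic graphs on exactly $k^2$ vertices — here $p$ denotes the integer partition function — and then to invoke the classical lower bound $p(m)\ge e^{2\sqrt m}/14$.

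The graphs I would use are the subdivisions of $K_k$ produced by Plesník's construction~\cite{plesnik1977two}: identify $V(K_k)$ with $\{1,\dots,k\}$, pick non-negative integers $a_1,\dots,a_k$, and replace each edge $\{i,j\}$ by a path having $a_i+a_j$ internal vertices. By~\cite{plesnik1977two} each such graph is geodetic, and since it is a subdivision of the biconnected graph $K_k$ (for $k\ge 3$) it is biconnected. The label $a_i$ contributes once to each of the $k-1$ edges incident to $i$, so the graph has $k+(k-1)\sum_{i=1}^k a_i$ vertices; imposing $\sum_{i=1}^k a_i=k$ makes this number exactly $k+(k-1)k=k^2$. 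The number of multisets $\{a_1,\dots,a_k\}$ of non-negative integers summing to $k$ is the number of partitions of $k$ into at most $k$ parts, which is $p(k)$, since every partition of $k$ already has at most $k$ parts.

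Next I would verify that distinct partitions give non-isomorphic graphs; assume $k\ge 4$. Then the $k$ branch vertices have degree $k-1\ge 3$, whereas every subdivision vertex has degree $2$, so any isomorphism between two such graphs sends branch vertices to branch vertices and thereby induces a permutation $\sigma\in S_k$; it also maps the interior of the subdivided $\{i,j\}$-edge (the unique maximal path of degree-$2$ vertices joining $v_i$ and $v_j$) onto the interior of the subdivided $\{\sigma(i),\sigma(j)\}$-edge, so $a_i+a_j=a'_{\sigma(i)}+a'_{\sigma(j)}$ for all $i\ne j$, where $a,a'$ are the two label vectors. Setting $b_i:=a'_{\sigma(i)}$ and using the identity $b_i=\tfrac12\bigl((b_i+b_j)+(b_i+b_\ell)-(b_j+b_\ell)\bigr)$ for three distinct indices $i,j,\ell$ (here $k\ge 3$ suffices), we obtain $b_i=a_i$ for every $i$; hence the label multiset, equivalently the underlying partition of $k$, is an isomorphism invariant. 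Consequently $g(k^2)\ge p(k)$ for all $k\ge 4$.

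It then remains to apply the elementary estimate $p(m)\ge e^{2\sqrt m}/14$, valid for every $m\ge 1$ (it can be deduced from the Hardy--Ramanujan asymptotic $p(m)\sim\tfrac{1}{4m\sqrt 3}\,e^{\pi\sqrt{2m/3}}$ together with a check of the finitely many small $m$, or proved directly from the generating function $\sum_m p(m)x^m=\prod_{i\ge 1}(1-x^i)^{-1}$). Taking $m=k$ yields $g(k^2)\ge p(k)\ge e^{2\sqrt k}/14$ for $k\ge 4$. I expect the only step needing genuine care to be the non-isomorphism argument — recovering the label multiset from the isomorphism type — and this is also what forces the choice $n=k$: only then do the admissible label vectors, constrained to total $k$, range over all $p(k)$ partitions, so a smaller complete graph would not suffice.
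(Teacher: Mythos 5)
Your proposal is correct and follows essentially the same route as the paper: both count the Plesn\'{\i}k subdivisions of $K_k$ on $k^2$ vertices, identify them with the $P(k)$ multisets of non-negative labels summing to $k$, and invoke the elementary bound $P(k)\ge e^{2\sqrt k}/14$ (which the paper attributes to Mar\'oti rather than deriving from the Hardy--Ramanujan asymptotic). The only difference is that you spell out the non-isomorphism argument that the paper delegates to the citations of Plesn\'{\i}k and Stemple, which is a welcome but not divergent addition.
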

\begin{proof}
	We only count geodetic subdivisions of $K_k$ with $k^2$ vertices.
  By~\cite{plesnik1977two,stemple1979geodetic} (see \cref{sec:intro}), every such subdivision is described by a multi-set of $k$ non-negative integers $\ell_1, \dots, \ell_{k}$ summing up to $k$.
  The number of such multi-sets is the partition number $P(k)$, which is bounded from below by $e^{2\sqrt{k}}/14$~\cite[Corollary 3.1]{maroti2003elementary}.\qed
\end{proof}

\noindent Let us highlight two questions on the growth rate of $g(n)$:
\vspace{-2mm}
\begin{itemize}
	\item Is there a constant $c$ such that there are infinitely many $n$ with $g(n) \leq c$?
	\item How much can the lower bound from \cref{obs:lowerbound} be improved?
	In particular, is there a constant $c$ such that $g(n) \leq 2^{c \sqrt[4]{n}}$ for all $n$?
\end{itemize}

\vspace{-2mm}
\paragraph*{Subdivisions.}

In our exhaustive search we found several geodetic graphs arising as subdivisions of other geodetic graphs.
Most of them fall in one of two categories using either \cref{prop:subdivcut} or the construction from~\cite{plesnik1977two} starting with a complete graph and assigning numbers to the vertices.
Moreover, there is the subdivision by putting the same even number of vertices on every edge of a geodetic graph.
Apart from those constructions, there are cases of geodetic subdivisions that do not fall in any of these categories:
The graph $h(n, m, 1)$ is a subdivision of $h(n, m, 0)$, and we found two geodetic graphs with 25 vertices and one with 20 vertices that are subdivisions of the Petersen graph (which have been studied previously in~\cite{frasser2020geodetic}).
Thus, the question for characterizing all geodetic subdivisions of a given geodetic graph remains an open problem.
Even more generally, which (arbitrary) graphs have a geodetic subdivision?

\vspace{-2mm}
\paragraph*{Extending the Exhaustive Search.}
Using the exhaustive search for geodetic graphs with more vertices is highly non-trivial.
Given the running times for up to 25 vertices, it seems reasonable that $n=26$ could be finished within a few months.
Larger values for $n$ seem only possible using some new heuristics for the pruning method.
One possible such pruning heuristic would be to use a lemma similar to \cref{lem:stemple6} how an 8-cycle (instead of a 6-cycle) can be embedded in a geodetic graph.
There are two challenges to overcome for such a result: first, to find a theoretical statement in the spirit of \cref{lem:stemple6} (necessarily with several more cases~-- see \cite{cu2021length} for some initial intents), and, second, to find a way to implement those cases efficiently.

\vspace{-2mm}
\paragraph*{Regular Geodetic Graphs.}
%\enlargethispage{\baselineskip}

Very few regular geodetic graphs are known.
We observe that the cubic graph in~\cite{bosak1978geodetic} is actually part of an infinite family of regular geodetic graphs.
It appears as a special case of a construction presented in that paper; however, there are infinitely many such special cases (for details, see appendix).

\begin{restatable}{observation}{observationregular}
	\label{prop:observationregular}
		For each prime power $k$ there is a $(k+1)$-regular geodetic graph with $k^3 + 3k^2 + 3k + 2$ vertices.
\end{restatable}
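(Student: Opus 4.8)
\emph{Proof idea.} The plan is to realise the desired graphs as instances of the general construction of Bos\'ak~\cite{bosak1978geodetic}, of which the family $h(m,n,s)$ recalled above is only the simplest part; the cubic graph on $28$ vertices highlighted in~\cite{bosak1978geodetic} is then merely the first member ($k=2$) of an infinite subfamily. Setting $q=k+1$, the target parameters factor suggestively: the vertex count is $k^3+3k^2+3k+2=(k+1)^3+1=q^3+1=(k+2)(k^2+k+1)$, and $k^2+k+1$ is exactly the number of points (and of lines) of a projective plane of order $k$, while the degree $k+1$ is exactly the number of lines through a point there. The phrase ``for each prime power $k$'' is the tell: it is precisely for prime powers that the Desarguesian projective plane $\mathrm{PG}(2,k)$ exists, and that plane (or a closely related design) will be the combinatorial input to the construction -- this is what provides the ``infinitely many special cases''.

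\textbf{Step 1 (set up the construction).} First I would make Bos\'ak's construction precise and recall the theorem of~\cite{bosak1978geodetic} that certifies its output is geodetic, and then specialise the input to a projective plane of order $k$. For $k=2$ one should check that this specialisation reproduces exactly the $28$-vertex cubic graph already listed in~\cite{bosak1978geodetic} (and re-found by our exhaustive search up to $32$ vertices), which is a useful sanity check.

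\textbf{Step 2 (parameters).} Next I would do the bookkeeping: count the vertices of the specialised graph to confirm the value $k^3+3k^2+3k+2$, and check that every vertex has degree exactly $k+1$. Regularity should fall out of the regularity of the plane -- every point lies on $k+1$ lines and every line carries $k+1$ points -- so that the various classes of vertices occurring in the construction all receive the same number of incidences; the only care needed is that the edges gluing the pieces together preserve this balance.

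\textbf{Step 3 (geodeticity).} The main work, and the step I expect to be the real obstacle, is to show that the graph is geodetic. The clean route is to invoke Bos\'ak's general geodeticity theorem and verify its hypotheses for a plane of order $k$, reducing everything to the incidence axioms (two points determine a unique line, two lines a unique point). The self-contained route is to verify the Unique Predecessor Theorem (\cref{lem:unique_predessesor}) directly: fixing a vertex $v$, describe each set $\cN^r(v)$ in terms of points, lines, and the auxiliary vertices of the construction, and show that every vertex of $\cN^r(v)$ has a unique neighbour in $\cN^{r-1}(v)$. I expect this to split into several cases according to which class $v$ lies in and which class the vertex at distance $r$ lies in, and in each case the existence of a second shortest path should translate into ``two distinct lines through two common points'' (or the dual), contradicting the plane axioms. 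A subsidiary point to pin down first is the diameter of the graph, since that fixes how many layers $\cN^r(v)$ must be analysed.
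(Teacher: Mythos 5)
Your proposal matches the paper's proof: the paper also specialises Bos\'ak's clique-edge-cover construction to the $k^2+k+1$ lines of a projective plane of order $k$ (viewed as a cover of $K_{k^2+k+1}$ by cliques of size $k+1$), obtaining one $(k+1)$-clique per line plus one separate vertex per point, and the parameter and regularity counts are exactly as you describe. Geodeticity is likewise delegated to Bos\'ak's general theorem rather than re-verified, so your plan is essentially the paper's argument.
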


Clearly, this family does not comprise all regular graphs as there are infinitely many 2-regular geodetic graphs (all odd cycles).
 Nevertheless, for larger $k$ it is not even known whether there are finitely or infinitely many $k$-regular graphs.

\clearpage
\bibliographystyle{splncs04}
\enlargethispage{2\baselineskip}
\bibliography{main}

%\clearpage
\pagebreak
\appendix

\section{Appendix}

\subsection{Geodetic Subdivision}

As it did not fit in the main part, we give the proof of \Cref{prop:subdivcut} here in the appendix.

\propsubdivcut*

\begin{proof}
  We distinguish the following four cases:

  For $u, v \in S$ (resp. $T$) the shortest path in $G'$ is the same as in $G$, thus, it is unique.

  For $u \in S$, $v \in T$ all $u,v$ paths in $G'$ are longer than the corresponding paths in $G$ by at least $k$.
  Since the shortest path uses only one edge from the cut set, its length is increased by exactly $k$.
  Thus, the corresponding path in $G'$ remains unique.

  For $u \in S$ (resp. $T$) and $v$ one of the vertices subdividing the edge $\{x,y\}$  with $x \in S$ the shortest path is from $v$ to $x$ and then to $u$.
  The path $v$ to $y$ and then to $u$ using a different subdivided edge from the cut set has length at least
  \[d_G(u,y) + k + 1\geq d_G(u,x) + k + 1 > d_{G'}(u,v).\]
  The first inequality, $d_G(u,y) \geq d_G(u,x)$, is a consequence of $S$ being geodetically closed.
  We conclude that the shortest path in $G'$ is unique.

  If $u$ and $v$ are vertices subdividing two different edges, then the shortest path must be on the cycle of the subdivided edges and the shortest paths between their respective endpoints.
  By Condition~\ref{subdivcutCondII}, the cycle has odd length; thus, the shortest path is unique.\qed
\end{proof}

\subsection{Cliques and Independent Sets}\label{app:proof_clique_independentset_bicon}

Here we give the proof of \cref{lem:clique_independentset_bicon}.

\cliqueindepset*

\begin{proof}
	Assume that $G$ is not complete and let $H \subseteq V$ be a maximal clique containing $C$.
	From here on, our proof has two major steps.
	First we show that there is a vertex $v_* \in V$ such that there are at least two vertices $x\neq y \in H$ with $d(v_*, x) = d(v_*, y) =d(v_*, H)$.
	The second step is to show that $v_*$ has distance $d(v_*, H)$ to all vertices in $H$ and then conclude the statement of the proposition.

	Let $v_1$ be a vertex that is not in $H$.
	We know such a vertex exists, because $G$ is not complete.
	\Wlog $v_1$ has a neighbour in $H$.
	We denote by $u_1$ a vertex in $H$ that is adjacent to $v_1$.
	Removing $u_1$, there must still be a path from $v_1$ to any of the remaining vertices in $H$, because $G$ is biconnected.
	We pick one such path call the vertices on that path $v_1, \dots, v_k, u_k$, where $u_k \in H$.
	\Wlog $v_i \notin H$ for $1 \le i \le k$, otherwise we consider the shorter path.

	Now we make some definitions.
	Let $d_i = d(v_i, H) = \min\{d(v_i, h) \mid h \in H\}$ be the distance of $v_i$ to the clique.
	Let $S_i = \{h \in H \mid d(v_i, h) = d_i\}$ be the set of witnesses.
	If there is an $i \in \{1, \dots, k\}$ such that $|S_i| \ge 2$, then we have found our vertex $v_* = v_i$, that has distance $d(v_*, H)$ to two vertices in $H$, and we can continue with the next step of the proof.
	Now assume that $|S_i| = 1$ for all $1 \le i \le k$.
	We will show that this leads to a contradiction.
	Observe that $S_1 = \{u_1\}$ and $S_k = \{ u_k \}$, as $v_1$ and $v_k$ both are at distance one to the clique $H$.
	Thus, $S_1 \neq S_k$ and there is an index $i$ such that $S_i \neq S_{i+1}$.
	We write $S_i = \{u_i\}$ and $S_{i+1} = \{u_{i+1}\}$ and distinguish three cases:
	\begin{enumerate}
		\item $d_i = d_{i+1}$.
		Now there are two paths of length $d_i + 1$ from $u_i$ to $v_{i+1}$:
		One using the edge $\{u_i, u_{i+1}\}$ in the clique, and then the path from $u_{i+1}$ to $v_{i+1}$.
		The other using the path from $u_i$ to $v_i$ and then the edge $\{v_i, v_{i+1}\}$.
		Since shortest paths in geodetic graphs are unique, there must be a path from $u_i$ to $v_{i+1}$ of length at most $d_i$.
		In fact, that path must have length $d_i$, since $d_i = d_{i+1}$ is the minimum distance of $v_{i+1}$ to any vertex in $H$.
		Thus, $u_i \in S_{i+1}$, a contradiction.
		\item $d_i = d_{i+1} + 1$.
		There is a path of length $d_i$ from $u_{i+1}$ to $v_i$:
		Take the path from $u_{i+1}$ to $v_{i+1}$ of length $d_i - 1$ and then use the edge $\{v_{i+1}, v_i\}$.
		Thus, $u_{i+1} \in S_{i}$, a contradiction.
		\item $d_i + 1 = d_{i+1}$.
		Symmetric to the previous case.
	\end{enumerate}

	\noindent
	Now for the second step of the proof, we have a vertex $v_* \notin H$ with distance $d = d(v_*, H)$ to the clique $H$.
	This is witnessed by $S = \{u \in H \mid d(v_*, u) = d\}$ with $|S| \ge 2$.
	If $d = 1$, then by \Cref{lem:adjacent_part_of_clique} the set $H \cup \{ v_* \}$ is a clique, contradicting the maximality of $H$.
	Thus, $d \ge 2$.
	Let $u_1, u_2 \in S$ and consider a vertex $z \in H \setminus S$.
	There are two paths of length $d + 1$ from $z$ to $v_*$:
	For $i \in \{1, 2\}$ first use the edge $\{z, u_i\}$ and then the path from $u_i$ to $v_*$ of length $d$.
	Thus, there must be a shorter path of length at most $d$.
	In fact, the length must be precisely $d$, because $d$ is the minimum distance of $v_*$ to any vertex in $H$.
	Hence, $S = H$.
	Taking two vertices in $H$, by \Cref{lem:adjacent_part_of_clique} their neighbourhoods only overlap in the other clique vertices.
	Now, for $u \in H$ let $x_u$ denote the first vertex after $u$ on the unique shortest path from $u$ to $v_*$ and define $I' = \{x_u \mid u\in H \}$.
	%  Then $I'$ is an independent set with $I' \sse \cN(H)$.
	We obtain the final independent set $I =\{x_u \mid u\in C \} \sse I'$.\qed
\end{proof}

\subsection{Enumerating Geodetic Graphs using \texttt{nauty}.}
In \Cref{lst:check_geodetic} one can see the prune function we used with \texttt{nauty}.
The \texttt{PRUNE} function is not only called for the final graph, but also for intermediate graphs:
Graphs are constructed by adding one vertex after another.
Each time a vertex is added, the prune function is called with the intermediate graph, which is an induced subgraph of the final graph.
We take advantage of this by checking whether each 4-cycle in an intermediate graph induces a complete subgraph as required by \cref{lem:diamonds}.

The \texttt{nauty} library represents graphs as adjacency matrices.
\texttt{MAXN} is the maximum number of nodes of the graph supported by \texttt{nauty}.
Each line of the adjacency matrix is stored as a bit vector (in our case in a single 32-bit integer).
The type aliases \texttt{graph} and \texttt{set} refer to the same integer type.
Our code assumes that a line of the adjacency matrix is exactly one \texttt{set}, thus, \texttt{MAXM == 1}.
A graph is represented using an array of \texttt{set}.
The constant \texttt{bit[i]} is a \texttt{set} with the $i$-th most significant bit set.
For more details we refer the reader to the documentation of \texttt{nauty}.

\begin{lstfloat}[H]
  \begin{lstlisting}[language=C]
#include "gtools.h"

_Static_assert(MAXM == 1, "MAXM == 1");
_Static_assert(MAXN >= 1, "MAXN >= 1");

int check_geodetic(graph *g, int n, int maxn) {
  int rounds = n - 1;
  if (n != maxn) {
    /* Intermediate graph: one iteration. */
    rounds = 1;
  }
  /* vertices reachable in round steps */
  graph p[MAXN * MAXM];
  for (int k = 0; k <= n * MAXM; k++) {
    p[k] = g[k];
  }
  for (int r = 1; r <= rounds; r++) {
    for (int i = 0; i < n; i++) {
      /* compute vertices at distance r+1 from i */
      set reach = p[i] & ~bit[i];
      set not_reach = ~(p[i] | bit[i]);
      for (int j = 0; j < n; j++) {
        if (reach & bit[j]) {
          // reach via j
          set reach_new = g[j] & not_reach;
          if (p[i] & reach_new) {
            return 1;
          }
          p[i] |= reach_new;
        }
      }
    }
  }
  return 0;
}
  \end{lstlisting}
  \caption{Function to check whether a graph is geodetic, using \texttt{nauty}.}
  \label{lst:check_geodetic}
\end{lstfloat}

Let $A$ be the adjacency matrix of the graph.
The idea of the algorithm is to compute powers $A^r$ of the adjacency matrix, for $1 \le r \le n$.
However, we only store whether or not an entry is 0.
Thus, we can use the same type of bitvector, that is used to store the graph.
If an entry $(i,j)$ is $0$ in some $A^r$, but not zero in $A^{r+1}$, and there is more than one path of length $r+1$, then the algorithm will return $1$ in line 27, indicating that the graph is not geodetic.
For intermediate graphs, only $A^2$ is computed, that is we only check for 4-cycles.

We have gathered some statistics in \Cref{tab:geng_graphs}: in the second and third column we show respectively the number of intermediate and final graphs the prune function has been called with.
In the last column we show the number of biconnected geodetic graphs.

\begin{table}[t]
  \centering
  \small
  \caption{Statistics on the \texttt{nauty} based Algorithm.}
  \label{tab:geng_graphs}
  \begin{tabular}{rrrr}
    \toprule
    \textbf{Vertices} & \textbf{Intermediate Graphs} & \textbf{Final Graphs} & \textbf{Geo.} \\
    \midrule
    3                 & 2                            & 1                     & 1             \\
    4                 & 5                            & 3                     & 1             \\
    5                 & 13                           & 7                     & 2             \\
    6                 & 105                          & 30                    & 1             \\
    7                 & 290                          & 132                   & 3             \\
    8                 & 984                          & 844                   & 1             \\
    9                 & 4346                         & 6652                  & 3             \\
    10                & 26768                        & 64627                 & 4             \\
    11                & 216599                       & 747768                & 3             \\
    12                & 2156780                      & 10226920              & 1             \\
    13                & 26681137                     & 163847610             & 9             \\
    14                & 395136409                    & 3057525463            & 2             \\
    15                & 6869665268                   & 66140495796           & 4             \\
    16                & 139198227567                 & 1652830274122         & 8             \\
    \bottomrule
  \end{tabular}
\end{table}

\subsection{Details on the Custom Search Algorithm}\label{app:pseudocode-example}

In the following we present pseudocode for the two generation steps of the search algorithm.
The generation of sibling-edges is realized by the function \textsc{GenCliques}($T$, $E$, $v$), where $T$ is the tree, $E$ the edge set of the graph being generated, and $v$ is the vertex whose children are to be partitioned into cliques.
$\operatorname{children}(v)$ refers to the children of $v$ in the tree $T$,
$\operatorname{NextOnLevel}(v)$ is the next vertex on the same level as $v$, and
$\operatorname{NextLevel}(v)$ is the set of vertices on the level below $v$.
The algorithm is initiated by calling \textsc{GenCliques}($T$, $E$, $r$) where $r$ is the root of $T$ and $E$ the set of edges of $T$.
We give the pseudo-code of the function:

\begin{algorithmic}[1]\small
\Function{GenCliques}{$T$, $E$, $v$}
  \For{each partition $P$ of $\operatorname{children}(v)$}%{true}
    \State $E' \gets E \cup {} \bigcup\bigl\{\binom{C}{2} \mid C \in P\bigr\} $
    \If {exists $\operatorname{NextOnLevel}(v)$}
      \State \textsc{GenCliques}($T$, $E'$, $\operatorname{NextOnLevel}(v)$)
    \ElsIf{$\operatorname{NextLevel}(v) \neq \emptyset$}
    \State \textsc{GenNeighbour}($T$, $E'$, $\operatorname{First}(\operatorname{NextLevel}(v))$)
    \ElsIf{$E'$ is geodetic and biconnected}
    \State Print $E'$
    \EndIf
  \EndFor
\EndFunction
\end{algorithmic}

In the third line the edge set is extended by adding cliques according to the partition of the children of $v$.
If $v$ is not the last vertex on the level, then in line 5, the function recursively calls itself for the next vertex.
If $v$ is the last vertex on the level, but not the last vertex in the tree, then the \textsc{GenNeighbour} is called, which realizes the generation of non-sibling edges.
If $v$ is the last vertex in the tree, then the recursion is terminated and we check whether the generated tree is geodetic and biconnected.
We give the pseudo-code of the \textsc{GenNeighbour} function:

\begin{algorithmic}[1]\small
\Function{GenNeighbour}{$T$, $E$, $v$}
  \State $K \gets \{u \in V \mid \operatorname{level}(u) = \operatorname{level}(v) \land u > v \land \operatorname{par}(u) \neq \operatorname{par}(v) \}$
  \For{each $U \subseteq K$}%{true}
    \State $E' \gets E \cup \bigl\{\{v,u\} \mid  u \in U \bigr\}$
    \State \textsc{Prune}($T$, $E'$, $v$)
    \If {exists $\operatorname{NextOnLevel}(v)$}
      \State \textsc{GenNeighbour}($T$, $E'$, $\operatorname{NextOnLevel}(v)$)
    \Else
      \State \textsc{GenCliques}($T$, $E'$, $\operatorname{First}(\operatorname{Level}(v))$)
    \EndIf
  \EndFor
\EndFunction
\end{algorithmic}

As the vertices are traversed in level-order, a call to \textsc{GenNeighbour}($T$, $E$, $v$) only generates edges to vertices that have not been traversed yet.
Recall that we assume an order on the vertices on the same level.
$ u > v$ refers to that order.
The parent of $u$ is denoted by $\operatorname{par}(u)$.
In line 2, the set $K$ of these vertices is computed.
The \textsc{Prune} function checks whether the subset $U$ of $K$ still can lead to the discovery of new geodetic graphs and aborts if that is not the case.
\Cref{fig:example-search-space} illustrates running the algorithm on a single tree.

\subsubsection{Pruning Small Cycles}
\label{app:small-cycles}

\paragraph*{4-cycles.} By \cref{lem:diamonds}, every 4-cycle needs to be a clique.
From that we derive the following restrictions on non shortest-path tree edges.
\begin{enumerate}
%  \item The children of any vertex $v$ can be partitioned into disjoint cliques (follows also from \cref{lem:partition_neighbours_cliques}).
  \item Let $u, v_1, v_2$ be vertices on the same level such that $v_1$ and $v_2$ have the same parent but $u$ has a different parent.
  Then it is not allowed to have both of the edges $\{u, v_1\}$ and $\{u, v_2\}$.
  One of them is allowed, though.
  Depending on the order in which we visit the vertices $u,v_1,v_2$, we either implement this as part of the pruning or via the forbidden edge set.
  \item Let $u, v, w, x$ be vertices on the same level.
  If $(u, v, w, x)$ is a 4-cycle, then the four vertices need to be a clique.
\end{enumerate}

\paragraph*{6-cycles.}
We also make use of the following fact due to Stemple.
\begin{lemma}
[Theorem~3.4~of~\cite{stemple1974geodetic}]
  \label{lem:stemple6}
  If a geodetic graph $G$ contains a cycle of length six, then one of the following three cases applies:
  \begin{enumerate}
    \item The vertices of the 6-cycle induce a complete subgraph.
    \item The vertices can be labelled cyclically $1, 2, \dots, 6$ such that there is an edge $\{1, 3\}$ and a path from $2$ to $5$  of length two whose intermediate vertex is not on the 6-cycle.
    \item Labelling as above, there are three paths of length two, connecting $1$ to $4$, $2$ to $5$ and $3$ to $6$, %$G[1, 4]$, $G[2, 5]$ and $G[3, 6]$
    whose intermediate vertices are all different and not on the 6-cycle.
  \end{enumerate}
\end{lemma}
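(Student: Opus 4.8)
The plan is to fix a 6-cycle with vertices labelled cyclically $1,2,\dots,6$ (edges between consecutive indices mod $6$) and to classify the induced structure by analysing the three pairs of opposite vertices $(1,4)$, $(2,5)$, $(3,6)$ together with the chords that may be present. The starting observation is that each opposite pair is at distance at most $2$: the two arcs of the cycle between $i$ and $i+3$ both have length $3$, so if $d(i,i+3)=3$ these would be two distinct geodesics, contradicting that $G$ is geodetic; hence $d(i,i+3)\in\{1,2\}$. To dispose of Case 1, suppose some opposite pair, say $(1,4)$, is at distance $1$, i.e.\ the long chord $\{1,4\}$ is present. Then $1\,2\,3\,4$ and $1\,6\,5\,4$ are both $4$-cycles, so by \cref{lem:diamonds} the sets $\{1,2,3,4\}$ and $\{1,4,5,6\}$ each induce a $K_4$, creating the chords $\{1,3\},\{2,4\},\{1,5\},\{4,6\}$. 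A short cascade of further applications of \cref{lem:diamonds} to the $4$-cycles that now appear (for instance $1\,2\,4\,6$, $1\,3\,4\,5$, $2\,3\,5\,4$) forces every remaining pair to be adjacent, so the six vertices induce $K_6$ — Case 1. I would record this as an auxiliary claim: \emph{if any long chord is present, the six vertices induce a complete graph.}

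It then remains to treat the situation where all three opposite pairs are at distance exactly $2$, equivalently where no long chord is present. Here I would first bound the number of short chords $\{i,i+2\}$. The key claim is that any two distinct short chords together close up a $4$-cycle whose completion, via \cref{lem:diamonds}, is a long chord. Using the dihedral symmetry of the hexagon it suffices to check the second short chord in each position relative to $\{1,3\}$ — namely $\{2,4\},\{3,5\},\{4,6\},\{5,1\},\{6,2\}$ — and in every case one exhibits a $4$-cycle on four of the six vertices (for example $\{1,3\}$ and $\{4,6\}$ give the $4$-cycle $1\,3\,4\,6$, whose completion forces both $\{1,4\}$ and $\{3,6\}$). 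By the previous claim a long chord forces $K_6$, so outside Case 1 there is at most one short chord.

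Finally I would split on the number of short chords, using that $d(i,i+3)=2$ together with the geodetic property gives each opposite pair a \emph{unique} common neighbour (its midpoint). If there is exactly one short chord, relabelled to $\{1,3\}$, then vertex $3$ is a common neighbour of $1$ and $4$ and vertex $1$ is a common neighbour of $3$ and $6$, so these two midpoints lie on the cycle; meanwhile no cycle vertex is adjacent to both $2$ and $5$ (this would demand a further chord), so the midpoint of $(2,5)$ is external. This yields the edge $\{1,3\}$ together with a length-two path from $2$ to $5$ through a vertex off the cycle — Case 2. If there is no short chord, then for each opposite pair no cycle vertex can serve as a common neighbour, so all three midpoints are external; moreover they are pairwise distinct, since a shared midpoint $a$ of, say, $(1,4)$ and $(2,5)$ would make $1\,a\,5$ and $1\,6\,5$ two distinct geodesics of length two, contradicting uniqueness. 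This gives three length-two paths joining $1$ to $4$, $2$ to $5$ and $3$ to $6$ through distinct external vertices — Case 3.

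The main obstacle I anticipate is the bookkeeping in the two middle steps: verifying that a single long chord really propagates to $K_6$, and that every pair of short chords produces a long chord. Both reduce to repeatedly locating a $4$-cycle and invoking \cref{lem:diamonds}, and the dihedral symmetry of the hexagon keeps the number of genuinely distinct configurations small; the remaining care is simply to confirm, using uniqueness of geodesics, that the midpoints in Cases 2 and 3 are unique, correctly placed on or off the cycle, and (in Case 3) mutually distinct.
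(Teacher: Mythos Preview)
The paper does not give its own proof of this lemma: it is quoted verbatim as Theorem~3.4 of Stemple~\cite{stemple1974geodetic} and used as a black box for pruning in \Cref{app:small-cycles}. There is therefore nothing in the paper to compare your argument against.

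That said, your proposed proof is correct and self-contained. The three-stage structure---(i) any long chord $\{i,i+3\}$ forces $K_6$ via repeated use of \cref{lem:diamonds}; (ii) in the absence of long chords, any two short chords $\{i,i+2\}$ close a $4$-cycle whose diagonal is a long chord, so at most one short chord survives; (iii) the count of short chords ($0$ or $1$) then determines Case~3 or Case~2 by locating the unique midpoints of opposite pairs---goes through cleanly. The case checks you flag as ``bookkeeping'' are indeed routine: up to dihedral symmetry there are only three relative positions for a second short chord, and in each a $4$-cycle with a long diagonal is immediate. The distinctness of the three external midpoints in Case~3 is exactly the argument you give (a shared midpoint would create a second geodesic of length two between vertices at cycle-distance two), and the placement of the $(2,5)$-midpoint off the cycle in Case~2 follows since any on-cycle common neighbour would require a second short chord.
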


\begin{figure}[H]
  \centering
  \input{graphs/example-search-space}
  \caption{Search space of the backtracking search for a single tree.
  We found one geodetic graph, a subdivision of $K_4$.
  For each unfinished graph without children, the pruning heuristic due to which it is discarded is displayed under the graph.}
  \label{fig:example-search-space}
\end{figure}

We check whether the conditions of \Cref{lem:stemple6} are fulfilled for the 6-cycles shown in \Cref{fig:prune-stemple}.
Vertices drawn above others are on a higher level in the tree.
We decided on those patterns because we can detect them efficiently and we can check the conditions of the lemma efficiently.
In particular, we can easily look up parent vertices in the tree, and, for two vertices,  thanks to the adjacency matrix representation of the graphs, we can enumerate all distance two paths between them with a simple binary \textsc{And}.
All of the patterns we consider have at most four vertices on the same level.
That rules out the first case of \Cref{lem:stemple6}.

In a 6-cycle, a shortcut edge is an edge between two vertices which are at distance two in the 6-cycle.
The conditions on non shortest-path tree edges and on 4-cycles derived from \Cref{lem:unique_predessesor} and \cref{lem:diamonds} allow for at most one shortcut edge per 6-cycle, provided that is is of one of the patterns that we consider.
Instead of explicitly checking which  shortcut edges are present, we use the following observation.
In cases 2 and 3 of \Cref{lem:stemple6}, there is a unique path of length two between any pair of opposing vertices on the cycle.
So for each 6-cycle corresponding to one of the patterns in \Cref{fig:prune-stemple}, we test whether those unique paths exist.

\begin{figure}[t]
  \centering
  \begin{subfigure}[b]{0.19\textwidth}
    \centering
    \begin{tikzpicture}[style=thick,xscale=0.8,yscale=0.48]
  \coordinate (gv) at (1, 2);
  \coordinate (px) at (0.3, 1);
  \coordinate (pv) at (1.7, 1);
  \coordinate (x) at (0, 0);
  \coordinate (y) at (1, 0);
  \coordinate (v) at (2, 0);
  \fill (gv) circle[shift only,radius=2pt] node[above] {$gp_v$};
  \fill (px) circle[shift only,radius=2pt] node[above left] {$p_x$};
  \fill (pv) circle[shift only,radius=2pt] node[above right] {$p_v$};
  \fill (x) circle[shift only,radius=2pt] node[below] {$x$};
  \fill (y) circle[shift only,radius=2pt] node[below] {$y$};
  \fill (v) circle[shift only,radius=2pt] node[below] {$v$};
  \draw (x) -- (y) -- (v) -- (pv) -- (gv) -- (px) -- cycle;
\end{tikzpicture}
  \end{subfigure}
  \hspace{-1cm}
  \hfill
  \begin{subfigure}[b]{0.19\textwidth}
    \centering
    \begin{tikzpicture}[style=thick,xscale=0.8,yscale=0.48]
  \coordinate (gv) at (1.35, 2);
  \coordinate (pu) at (0.3, 1);
  \coordinate (c) at (1, 1);
  \coordinate (pv) at (1.7, 1);
  \coordinate (u) at (0, 0);
  \coordinate (v) at (2, 0);
  \fill (gv) circle[shift only,radius=2pt] node[above] {$gp_v$};
  \fill (pu) circle[shift only,radius=2pt] node[above left] {$p_u$};
  \fill (pv) circle[shift only,radius=2pt] node[above right] {$p_v$};
  \fill (u) circle[shift only,radius=2pt] node[below] {$u$};
  \fill (c) circle[shift only,radius=2pt] node[below] {$c$};
  \fill (v) circle[shift only,radius=2pt] node[below] {$v$};
  \draw (pu) -- (u) -- (v) -- (pv) -- (gv) -- (c) -- cycle;
\end{tikzpicture}
  \end{subfigure}
  \hspace{-1cm}
  \hfill
  \begin{subfigure}[b]{0.18\textwidth}
    \centering
    \begin{tikzpicture}[style=thick,xscale=-0.8,yscale=0.48]
  \coordinate (gv) at (1.35, 2);
  \coordinate (pu) at (0.3, 1);
  \coordinate (c) at (1, 1);
  \coordinate (pv) at (1.7, 1);
  \coordinate (u) at (0, 0);
  \coordinate (v) at (2, 0);
  \fill (gv) circle[shift only,radius=2pt] node[above] {$gp_v$};
  \fill (pu) circle[shift only,radius=2pt] node[above right] {$p_w$};
  \fill (pv) circle[shift only,radius=2pt] node[above left] {$p_v$};
  \fill (x) circle[shift only,radius=2pt] node[below] {$w$};
  \fill (c) circle[shift only,radius=2pt] node[below] {$c$};
  \fill (v) circle[shift only,radius=2pt] node[below] {$v$};
  \draw (pu) -- (u) -- (v) -- (pv) -- (gv) -- (c) -- cycle;
\end{tikzpicture}
  \end{subfigure}
  \hspace{-1cm}
  \hfill
  \begin{subfigure}[b]{0.20\textwidth}
    \centering
    \begin{tikzpicture}[style=thick,xscale=0.8,yscale=0.48]
  \coordinate (c) at (1, 1);
  \coordinate (px) at (0.3, 1);
  \coordinate (pv) at (1.7, 1);
  \coordinate (x) at (0, 0);
  \coordinate (y) at (1, 0);
  \coordinate (v) at (2, 0);
  \fill (c) circle[shift only,radius=2pt] node[above] {$c$};
  \fill (px) circle[shift only,radius=2pt] node[above] {$p_x$};
  \fill (pv) circle[shift only,radius=2pt] node[above] {$p_v$};
  \fill (x) circle[shift only,radius=2pt] node[below] {$x$};
  \fill (y) circle[shift only,radius=2pt] node[below] {$y$};
  \fill (v) circle[shift only,radius=2pt] node[below] {$v$};
  \draw (x) -- (y) -- (v) -- (pv) -- (c) -- (px) -- cycle;
\end{tikzpicture}
  \end{subfigure}
  \hspace{-1cm}
  \hfill
  \begin{subfigure}[b]{0.24\textwidth}
    \centering
    \begin{tikzpicture}[style=thick,xscale=0.8,yscale=0.48]
  \coordinate (pu) at (-0.5, 1);
  \coordinate (pv) at (1.5, 1);
  \coordinate (u) at (-1, 0);
  \coordinate (x) at (0, 0);
  \coordinate (y) at (1, 0);
  \coordinate (v) at (2, 0);
  \fill (u) circle[shift only,radius=2pt] node[below] {$u$};
  \fill (pu) circle[shift only,radius=2pt] node[above] {$p_u$};
  \fill (pv) circle[shift only,radius=2pt] node[above] {$p_v$};
  \fill (x) circle[shift only,radius=2pt] node[above right] {$x$};
  \fill (y) circle[shift only,radius=2pt] node[above] {$y$};
  \fill (v) circle[shift only,radius=2pt] node[below] {$v$};
  \draw (x) -- (y) -- (pv) -- (v) edge[bend left] (u);
  \draw (u) -- (pu) -- (x);
\end{tikzpicture}
  \end{subfigure}
  \caption{6-cycles considered for pruning.}
  \label{fig:prune-stemple}
\end{figure}

\subsection{Running Times}\label{app:running_times}

In \Cref{tab:running_times} we compare the running times of the \texttt{nauty}-based algorithm to our custom search algorithm, as described in \Cref{subsec:custom-search-algorithm}.
We ran both algorithms on a machine with a Ryzen 9 5900X (12 cores/ 24 threads) processor and 128\,GB of RAM.

Additionally, we collected running times with some of the pruning techniques and optimizations disabled, to demonstrate their effectiveness.
In particular, we disabled to pruning of 6-cycles (Stemple Pruning), the check whether it is still possible to obtain a biconnected graph from an intermediate graph (Biconnectivity), and the optimization enforcing that each clique vertex has a neighbour outside of the clique (Clique Neighbour).

\begin{table}[t]
  \centering
  \footnotesize
  \setlength{\tabcolsep}{2pt}
  \caption{Comparison of Running Times.}
  \label{tab:running_times}
  \begin{tabular}{lrrrrrrrrrrrrrrr}
    \toprule
    \footnotesize \textbf{Vertices} & 13 & 14 & 15 & 16 & 17 & 18 & 19 & 20 & 21 & 22 & 23 & 24 & 25 \\
    \midrule
    \footnotesize \texttt{nauty}-based  & 8.9s & 3.4m & 93m & 56h &            \\
    \footnotesize algorithm \\
    \footnotesize Custom search & 10ms & 29ms & 87ms & 0.5s & 1.0s & 2.3s & 11s & 58s & 5m & 41m & 4.8h & 34h & 350h  \\
    \footnotesize algorithm \\
    \footnotesize Custom without & 90ms & 0.45s & 8.1s & 94s & 14m & 50m & 5.9h & \\
    \footnotesize Stemple Pruning \\
    \footnotesize Custom without & 7ms & 24ms & 0.1s & 0.46s & 1.6s & 2.8s & 11s & 63s & 6.7m & 47m & 6h \\
    \footnotesize Biconnectivity \\
    \footnotesize Custom without & 14ms & 51ms & 0.2s & 1.1s & 2.9s & 5.7s & 26s & 115s & 13m & 82m & 9.7h \\
    \footnotesize Clique Neighbour \\
    \bottomrule
  \end{tabular}
\end{table}

\subsection{New Constructions}
In this section we give formal proofs that our new constructions are indeed geodetic graphs.

\begin{proposition}
  The graph $H(m, n, p, s)$ is geodetic for all $m, n, p \ge 2$ and $s \ge 0$.
\end{proposition}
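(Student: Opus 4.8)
The plan is to prove that $H = H(m,n,p,s)$ is geodetic by showing directly that any two vertices are joined by a unique geodesic, leaning on the fact that each of the three ``arms'' of $H$ is a copy of one of Bosak's graphs $h(\cdot,\cdot,s)$, which are geodetic~\cite{bosak1978geodetic}. First I would fix notation: write $X,Y,Z$ for the three cliques (the only vertices belonging to two arms), and $A_{XY}\cong h(m,n,s)$, $A_{YZ}\cong h(n,p,s)$, $A_{ZX}\cong h(p,m,s)$ for the arms, recording for each of $X,Y,Z$ in which arm it lies on the \emph{centre side} (joined through a path of length $s{+}1$ to a star centre) and in which on the \emph{leaf side}. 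Then I would write down, once, the distance table of $h(\cdot,\cdot,s)$ by vertex type; the only facts used are that these distances depend on $s$ alone, that the diameter of an arm is $2s{+}3$ (attained between its two cliques), that from any centre-side vertex of an arm all vertices of that arm's leaf-side clique are at the same distance, and that a leaf or leaf-connector vertex of an arm has a unique closest vertex in each of the two cliques of that arm.

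The first reduction is a structural remark: the three arms meet pairwise exactly in a clique and the three cliques are otherwise pairwise far apart, so $H$ is a triangle of arms. Hence a walk between two vertices of the same arm never gains by leaving it, so for such a pair the geodesic is the geodesic of that arm, unique since $h(\cdot,\cdot,s)$ is geodetic. For $u$ and $w$ in distinct arms $A$ and $B$, sharing a clique $Q$ (with the remaining two cliques $Q',Q''$ joined by the third arm $A'$), every walk from $u$ to $w$ is either \emph{direct}, passing through $Q$, or a \emph{detour}, leaving $A$ through $Q'$, crossing $A'$, and entering $B$ through $Q''$. Using the arm tables I would express the minimal direct length as $d(u,Q)+\varepsilon+d(Q,w)$ and the minimal detour length as $d(u,Q')+d_{A'}(Q',Q'')+\varepsilon'+d(Q'',w)$ with $\varepsilon,\varepsilon'\in\{0,1\}$, and then run through the finitely many type-pairs for $(u,w)$, using the cyclic symmetry of $H$ on $(X,Y,Z)$ to cut the number of genuinely distinct cases.

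In that analysis two points must be settled together. (a) The minimal direct length and the minimal detour length are never equal: the three pairwise distances between $X,Y,Z$ are all equal to $2s{+}3$, so a closed walk winding once around the triangle of cliques has odd length, and consequently a direct and a detour walk between the same two vertices always have opposite parity and cannot tie (this is the same kind of parity condition as Condition~\ref{subdivcutCondII} of Proposition~\ref{prop:subdivcut}); so $d(u,w)$ is realised by exactly one of the two kinds. (b) Within the winning kind, the clique-crossing vertices are forced and each piece is an arm-geodesic, hence unique; the delicate point is at vertices equidistant from the relevant clique (the centre-side vertices), where one checks that the other endpoint — through its own cheapest approach to that clique — pins down a single crossing vertex, and, dually, that whenever \emph{both} endpoints would be equidistant from the shared clique $Q$ the detour is strictly the shorter option, so the would-be multiplicity through $Q$ is not a geodesic at all.

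I expect step (b) to be the main obstacle, both because of the number of type-pairs and because it forces one to be precise about the fact that ``equidistance from a clique'' always costs exactly one step more than the ``home direction'' of the vertex: this one-step penalty is what removes the multiplicities and, together with the parity argument in (a), what forces the direct/detour dichotomy. An equivalent but possibly shorter packaging would be to verify the Unique Predecessor condition of Lemma~\ref{lem:unique_predessesor} with the root ranging over one representative of each vertex type; the bookkeeping is the same.
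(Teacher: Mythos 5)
Your overall strategy (reduce to the three arms, then treat cross-arm pairs by a direct-versus-detour dichotomy) can probably be pushed through, but as written it contains a false claim at the one place where it needs to be exact. Step (a) asserts that a closed walk winding once around the triangle of cliques has odd length, so that a minimal direct path and a minimal detour path can never tie. This is not true. Using the identity $d(u,X)+d(u,Y)=2s+3$ for every vertex $u$ of the arm joining $X$ and $Y$ (every vertex lies on an $X$--$Y$ geodesic), one finds for $u,w$ in the two arms sharing $Q=Y$ that the minimal detour length is $6s+9-d(u,Y)-d(w,Y)$, while the minimal direct length is $d(u,Y)+d(w,Y)+\varepsilon$ with $\varepsilon=1$ exactly when the sets of nearest $Y$-vertices of $u$ and of $w$ are disjoint. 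The two lengths sum to $6s+9+\varepsilon$, so parity only settles the case $\varepsilon=0$; when $\varepsilon=1$ they have the \emph{same} parity, and a tie occurs precisely when $d(u,Y)+d(w,Y)=3s+4$. Concretely, take $u$ a leaf of one arm with nearest $Y$-vertex $y_1$ and $w$ a star centre of the other arm attached to $y_2$: the shortest direct and detour paths have lengths $2s+3$ and $4s+7$, and their union is a cycle of length $6s+10$ winding once around the triangle --- even, contradicting your claim. Excluding a tie in the $\varepsilon=1$ case needs a magnitude estimate, not parity: a vertex of the $Y$-leaf-side arm with a proper nearest set has $d(u,Y)\le s+1$, and one of the $Y$-centre-side arm has $d(w,Y)\le 2s+2$, so the sum is at most $3s+3<3s+4$. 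On top of this, step (b) --- which you yourself identify as the main obstacle --- is only described, never carried out, so the proposal is a plan rather than a proof.

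For comparison, the paper's argument avoids the case analysis entirely. It first shows that a geodesic between two vertices of the same arm cannot leave that arm (an excursion leaving and re-entering through the same clique can be replaced by a clique edge, and one crossing to the other clique costs at least $4s+6$ versus $2s+3$ inside), so uniqueness there follows from the geodeticity of $h(\cdot,\cdot,s)$. For vertices in different arms it notes that two distinct geodesics of common length $d$ would form an even cycle not contained in any single arm, that the shortest such even cycle has length $6s+10$, and that the diameter of $H(m,n,p,s)$ is $3s+4$, whence $2d\le 6s+8<6s+10$, a contradiction. If you want to keep your route, the cleanest repair is to replace the parity claim by exactly this ``even cycles not contained in an arm are longer than twice the diameter'' estimate, which also subsumes most of step (b).
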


\begin{proof}
  Take two vertices in the subgraph $h(m, n, s)$ and a path joining them.
  If the path contains vertices not part of the subgraph $h(m, n, s)$, then it is not a shortest path:
  If a path segment leaves and enters the subgraph though the same clique, then the segment can be replaced with a clique edge.
  If it leaves through the $K_m$ and enters through the $K_n$ (or vice versa), then the segment has length at least $4s+6$ and can be replaced by the shortest path inside the subgraph $h(m, n, s)$, which has length $2s+3$.
  Since $h(m, n, s)$ is geodetic, shortest paths between two vertices in the subgraph are unique.
  By symmetry this also applies to the subgraphs $h(n, p, s)$ and $h(p, m, s)$.

  Now consider two vertices $u$ and $v$ not in the same subgraph and assume that there are two distinct paths of length $d$ from $u$ to $v$.
  We have to show that then the shortest path has length less than $d$:
  The two paths form a cycle of even length.
  The smallest even cycle not contained in one of the subgraphs has diameter $3s + 5$ (\ie length $6s + 10$).
  This is because it either must traverse one of the subgraphs entirely from left to right and back and one of the subgraphs until the middle or it must go around the complete ``cycle'' once (see \cref{fig:family-H}).
  Thus, $d \ge 3s + 5$.
  However, the diameter of $H(m, n, p, s)$ is $3s + 4$; thus, the shortest path from $u$ to $v$ has length at most $3s + 4$ and is unique.\qed
\end{proof}

\begin{proposition}
	The graph $F_k$ is geodetic for any odd integer $k \ge 3$.
\end{proposition}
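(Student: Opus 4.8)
The plan is to treat $F_3$ on its own and to obtain every $F_k$ with $k\ge 5$ odd from $F_5$ by a single application of \Cref{prop:subdivcut}. For $k=3$ there is nothing to do: $F_3=K_5^*$, which is geodetic by the star construction of~\cite{parthasarathy1982some}, as already observed above. So fix an odd $k=5+2j$ with $j\ge 0$. The first step is to recognise that $F_k$ is, up to isomorphism, exactly the graph obtained from $F_5$ by subdividing each of the four edges $\{u_1,u_2\}$, $\{u_4,u_5\}$, $\{v_1,v_2\}$, $\{v_4,v_5\}$ (in the labelling of $F_5$) with $j$ new vertices each: such a subdivision turns each of the two $5$-cycles into a $k$-cycle, leaves the two endpoint vertices of each cycle (those attached to $s_1,s_2$, resp.\ $t_1,t_2$) adjacent on the cycle, and leaves the vertex attached to $b$ at distance $(k-1)/2=j+2$ from both of them, which is precisely the description of $F_k$; for $j=0$ we simply recover $F_5$.

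Consequently it suffices to verify two things. First, that $F_5$ itself is geodetic: this is a finite check — $F_5$ is the $23$-vertex graph appearing in \Cref{tab:graphs}, and it can also be verified directly via the Unique Predecessor Theorem (\Cref{lem:unique_predessesor}), where the $D_8$ symmetry of $F_5$ keeps the number of cases small. Second, that the cut $C=(S,T)$ of $F_5$ with $S=\{u_2,u_3,u_4,v_2,v_3,v_4,b\}$ and $T=V(F_5)\setminus S$ — whose cut set is exactly those four edges — satisfies the two hypotheses of \Cref{prop:subdivcut}; the proposition then immediately gives that the subdivision $F_k$ is geodetic.

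For the cut hypotheses, geodetic closedness of $S$ follows because $F_5[S]$ is a tree ($u_2\,u_3\,u_4$ and $v_2\,v_3\,v_4$ joined by $u_3\,b\,v_3$), and any walk of $F_5$ leaving $S$ must re-enter through a second of the four boundary edges; comparing the four possible detours (which run through $T$ and have length at least $3$, using $d(u_1,u_5)=1$ or a $t$-side/$s$-side path of length $\ge 5$) with the direct $S$-distances ($d(u_2,u_4)=d(v_2,v_4)=2$, the mixed $S$-distances being $4$ through $b$) shows every detour is strictly longer. Symmetrically, $T$ is geodetically closed: the cheapest detour through $S$ between two of $\{u_1,u_5,v_1,v_5\}$ has length $4$ (for $u_1\leftrightarrow u_5$ and $v_1\leftrightarrow v_5$) or $6$ (for the four mixed pairs, which must pass through $b$), while inside $T$ one has $d(u_1,u_5)=d(v_1,v_5)=1$ and $d(u_1,v_1)=d(u_1,v_5)=d(u_5,v_1)=d(u_5,v_5)=5$, so no shortest path between $T$-vertices can dip into $S$. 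For the parity condition, the cut set has $S$-endpoints $u_2,u_4,v_2,v_4$ and $T$-endpoints $u_1,u_5,v_1,v_5$; among the six pairs of distinct cut edges, the two lying within one cycle give $d(u_2,u_4)+d(u_1,u_5)=2+1=3$ and $d(v_2,v_4)+d(v_1,v_5)=2+1=3$, while each of the four pairs straddling the two cycles gives $4+5=9$ — all odd, as required.

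The distance bookkeeping in $F_5$ is routine; the genuine content is the base case ``$F_5$ is geodetic'', which is the main obstacle and is also what forces $F_5$ (rather than $F_3$) to serve as the starting point of the subdivision: the analogous cut of $F_3$ already fails hypothesis~1, because the (unique) shortest $u_1$–$v_1$ path in $F_3$ has length $4$ and runs through $b\in S$, so $T$ is not geodetically closed there.
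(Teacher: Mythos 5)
Your proof is correct and follows essentially the same route as the paper's: verify $F_3$ and $F_5$ directly, then obtain $F_{5+2\ell}$ by applying \cref{prop:subdivcut} to $F_5$ with exactly the cut $S=\{b,u_2,u_3,u_4,v_2,v_3,v_4\}$. The only difference is that you spell out the distance computations for geodetic closedness and the parity condition, which the paper leaves as an easy check from the figure.
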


\begin{proof}
	The graphs $F_3$ and $F_5$ can be easily checked by hand or computer. To prove that $F_k$ for $k \geq 7$ is geodetic, we apply \cref{prop:subdivcut} to the graph $F_5$:

	Let $S = \{ b, u_2, u_3, u_4, v_2, v_3, v_4\}$ and $T$ the remaining vertices (\ie in the picture in \cref{fig:Fk}, we cut $F_5$ via a horizontal line).
  Then $S$ and $T$ are geodetically closed, the cut edges are the four edges $\{u_1,u_2\},\{u_4,u_5\},\{v_1,v_2\},\{v_4,v_5\}$ and, condition~2 in \cref{prop:subdivcut} is satisfied.
  The latter can be seen easily in~\cref{fig:Fk} (by symmetry, we only need to consider the two cases that either both edges are in the same of the two 5-cycles or that they are in different 5-cycles).
  Hence, by \cref{prop:subdivcut} it remains to observe that by putting $\ell$ new vertices on every cut edge, we obtain the graph $F_{5 + 2\ell}$.\qed
\end{proof}

\subsection{Star Operation}

\Cref{fig:k6-star} shows the result of applying the star operation as defined in~\cite{parthasarathy1982some} to a $K_6$.
To obtain the graph on the left with 24 vertices, start with an $\mathcal{O}$-cover of $K_6$ with 4 triangles and 4 single edges.
The graph on the right with 25 vertices is obtained starting with an $\mathcal{O}$-cover of $K_6$ with one $K_4$, one triangle and 6 single edges.

\begin{figure}[t]
  \centering
  \begin{subfigure}[b]{0.48\linewidth}
    \centering
    \begin{tikzpicture}[style=thick,scale=0.8]

  \foreach \x in {90,210,330}{

    \coordinate (a) at ($(\x:2cm)+(\x-90-90:0.4cm)$);
    \coordinate (b) at ($(\x:2cm)+(\x-90-210:0.4cm)$);
    \coordinate (c) at ($(\x:2cm)+(\x-90-330:0.4cm)$);
    \coordinate (b2) at ($(\x+120:2cm)+(\x+120-90-210:0.4cm)$);
    \coordinate (c3) at ($(\x+240:2cm)+(\x+240-90-330:0.4cm)$);
    \coordinate (x) at ($(\x:0.5cm)!0.5!(a)$);
    \coordinate (b2c3) at ($(b2)!0.5!(c3)$);
    \coordinate (d1) at ($(x)!0.3!(b2c3) + (\x-90:0.5cm)$);
    \coordinate (d2) at ($(x)!0.75!(b2c3) + (\x-90:0.5cm)$);

    \fill (\x:0.4cm) circle[shift only,radius=2pt];
    \fill (a) circle[shift only,radius=2pt];
    \fill (b) circle[shift only,radius=2pt];
    \fill (c) circle[shift only,radius=2pt];

    \draw (\x:0.4cm) -- (\x+120:0.4cm);
    \draw (a) -- (b) -- (c) -- cycle;

    \draw (\x:0.5cm) -- (a);
    \fill (x) circle[shift only,radius=2pt];
    \fill (b2c3) circle[shift only,radius=2pt];
    \fill (d1) circle[shift only,radius=2pt];
    \fill (d2) circle[shift only,radius=2pt];

    \draw (b2) -- (c3);
    \draw (x) -- (d1) -- (d2) -- (b2c3);
  }
\end{tikzpicture}
  \end{subfigure}
  \hfill
  \begin{subfigure}[b]{0.48\linewidth}
    \centering
    \begin{tikzpicture}[style=thick,scale=0.85]
        \foreach \x in {90,210,330}{
          \draw (\x:2.4cm) -- (\x+120:2.4cm);
          \fill (\x:2.4cm) circle[shift only,radius=2pt];
        }
        \coordinate (l) at (210:1.6cm);
        \coordinate (r) at (330:1.6cm);
        \coordinate (t) at (90:1.6cm);
        \coordinate (0) at (-0.6, 0.8);
        \coordinate (1) at (0, 0.8);
        \coordinate (2) at (0.6, 0.8);
        \coordinate (0a) at (-0.8, 0.5);
        \coordinate (1a) at (0, 0.5);
        \coordinate (2a) at (0.8, 0.5);
        \draw (90:2.4cm) -- (t);
        \draw (210:2.4cm) -- (l);
        \draw (330:2.4cm) -- (r);
        \draw (t) -- (0);
        \draw (t) -- (1);
        \draw (t) -- (2);
        \draw (0) -- (1) -- (2);
        \draw (0) edge[bend left] (2);
        \draw (l) -- (0a);
        \draw (l) -- (1a);
        \draw (l) -- (2a);
        \draw (r) -- (0a);
        \draw (r) -- (1a);
        \draw (r) -- (2a);
        \draw (0) -- (0a);
        \draw (1) -- (1a);
        \draw (2) -- (2a);
        \fill (0) circle[shift only,radius=2pt];
        \fill (1) circle[shift only,radius=2pt];
        \fill (2) circle[shift only,radius=2pt];
        \fill (0a) circle[shift only,radius=2pt];
        \fill (1a) circle[shift only,radius=2pt];
        \fill (2a) circle[shift only,radius=2pt];
        \fill (l) circle[shift only,radius=2pt];
        \fill (r) circle[shift only,radius=2pt];
        \fill (t) circle[shift only,radius=2pt];
        \fill (90:2cm) circle[shift only,radius=2pt];
        \fill ($(l)!0.333!(0a)$) circle[shift only,radius=2pt];
        \fill ($(l)!0.666!(0a)$) circle[shift only,radius=2pt];
        \fill ($(l)!0.333!(1a)$) circle[shift only,radius=2pt];
        \fill ($(l)!0.666!(1a)$) circle[shift only,radius=2pt];
        \fill ($(l)!0.333!(2a)$) circle[shift only,radius=2pt];
        \fill ($(l)!0.70!(2a)$) circle[shift only,radius=2pt];
        \fill ($(r)!0.333!(2a)$) circle[shift only,radius=2pt];
        \fill ($(r)!0.666!(2a)$) circle[shift only,radius=2pt];
        \fill ($(r)!0.333!(1a)$) circle[shift only,radius=2pt];
        \fill ($(r)!0.666!(1a)$) circle[shift only,radius=2pt];
        \fill ($(r)!0.333!(0a)$) circle[shift only,radius=2pt];
        \fill ($(r)!0.70!(0a)$) circle[shift only,radius=2pt];
      \end{tikzpicture}
  \end{subfigure}
  \caption{The star operation applied to the $K_6$ in two different ways.}
  \label{fig:k6-star}
\end{figure}

\subsection{Pulling of Geodetic Subgraphs}
\label{sec:pull}

Plesnik introduced the operation of pulling a geodetic subgraph homeomorphic to a complete graph as a method for constructing a geodetic graph from another~\cite{plesnik1984construction}.
Applying this operation twice to a geodetic subdivision of $K_6$ obtained from the labelling $(1, 1, 1, 1, 1, 0)$, we obtain a geodetic graph with $25$ vertices.
This is visualized in \Cref{fig:k6-pull}.

\begin{figure}[t]
	\centering
	\begin{subfigure}[b]{0.3\linewidth}
		\centering
		\begin{tikzpicture}[style=thick,xscale=0.85, yscale=0.6]

        \coordinate (1) at (0, 0);
        \coordinate (2) at (2, 0);
        \coordinate (3) at (1.5, 2);
        \coordinate (4) at (0, 4);
        \coordinate (5) at (2, 4);
        \coordinate (6) at (3, 2);
        \coordinate (61) at (3, 0.5);
        \coordinate (62) at (2.5, 1);
        \coordinate (63) at (2.5, 2);
        \coordinate (64) at (2.5, 3);
        \coordinate (65) at (3, 3.5);
        \draw (1) -- (2);
        \draw (1) -- (3);
        \draw (1) -- (4);
        \draw (1) -- (5);
        \draw (1) to [bend right=40] (61);
        \draw (61) -- (6);
        \draw (2) -- (3);
        \draw (2) -- (4);
        \draw (2) -- (5);
        \draw (2) -- (6);
        \draw (3) -- (4);
        \draw (3) -- (5);
        \draw (3) -- (6);
        \draw (4) -- (5);
        \draw (5) -- (6);
        \draw (4) to [bend left=40] (65);
        \draw (65) -- (6);
        \fill (1) circle[shift only,radius=2pt];
        \fill (2) circle[shift only,radius=2pt];
        \fill (3) circle[shift only,radius=2pt];
        \fill (4) circle[shift only,radius=2pt];
        \fill (5) circle[shift only,radius=2pt];
        \fill (6) circle[shift only,radius=2pt];
        \fill (61) circle[shift only,radius=2pt];
        \fill (62) circle[shift only,radius=2pt];
        \fill (63) circle[shift only,radius=2pt];
        \fill (64) circle[shift only,radius=2pt];
        \fill (65) circle[shift only,radius=2pt];
        \fill ($(1)!0.333!(2)$) circle[shift only,radius=2pt];
        \fill ($(1)!0.666!(2)$) circle[shift only,radius=2pt];
        \fill ($(1)!0.333!(3)$) circle[shift only,radius=2pt];
        \fill ($(1)!0.666!(3)$) circle[shift only,radius=2pt];
        \fill ($(1)!0.333!(4)$) circle[shift only,radius=2pt];
        \fill ($(1)!0.666!(4)$) circle[shift only,radius=2pt];
        \fill ($(1)!0.333!(5)$) circle[shift only,radius=2pt];
        \fill ($(1)!0.666!(5)$) circle[shift only,radius=2pt];
        \fill ($(2)!0.333!(3)$) circle[shift only,radius=2pt];
        \fill ($(2)!0.666!(3)$) circle[shift only,radius=2pt];
        \fill ($(2)!0.333!(4)$) circle[shift only,radius=2pt];
        \fill ($(2)!0.666!(4)$) circle[shift only,radius=2pt];
        \fill ($(2)!0.333!(5)$) circle[shift only,radius=2pt];
        \fill ($(2)!0.666!(5)$) circle[shift only,radius=2pt];
        \fill ($(3)!0.333!(4)$) circle[shift only,radius=2pt];
        \fill ($(3)!0.666!(4)$) circle[shift only,radius=2pt];
        \fill ($(3)!0.333!(5)$) circle[shift only,radius=2pt];
        \fill ($(3)!0.666!(5)$) circle[shift only,radius=2pt];
        \fill ($(4)!0.333!(5)$) circle[shift only,radius=2pt];
        \fill ($(4)!0.666!(5)$) circle[shift only,radius=2pt];
\end{tikzpicture}
	\end{subfigure}
	\hfill
	\begin{subfigure}[b]{0.3\linewidth}
		\centering
		\begin{tikzpicture}[style=thick,xscale=0.85, yscale=0.6]

        \coordinate (1) at (0, 0);
        \coordinate (2) at (2, 0);
        \coordinate (3) at (1.5, 2);
        \coordinate (1a) at (0.6, 0.6);
        \coordinate (2a) at (1.4, 0.6);
        \coordinate (3a) at (1, 1.4);
        \coordinate (4) at (0, 4);
        \coordinate (5) at (2, 4);
        \coordinate (6) at (3, 2);
        \coordinate (61) at (3, 0.5);
        \coordinate (62) at (2.5, 1);
        \coordinate (63) at (2.5, 2);
        \coordinate (64) at (2.5, 3);
        \coordinate (65) at (3, 3.5);
        \draw (1a) -- (2a);
        \draw (1a) -- (3a);
        \draw (1) -- (4);
        \draw (1) -- (5);
        \draw (1) to [bend right=40] (61);
        \draw (61) -- (6);
        \draw (2a) -- (3a);
        \draw (2) -- (4);
        \draw (2) -- (5);
        \draw (2) -- (6);
        \draw (3) -- (4);
        \draw (3) -- (5);
        \draw (3) -- (6);
        \draw (4) -- (5);
        \draw (5) -- (6);
        \draw (4) to [bend left=40] (65);
        \draw (65) -- (6);
        \draw (1) -- (1a);
        \draw (2) -- (2a);
        \draw (3) -- (3a);
        \fill (1) circle[shift only,radius=2pt];
        \fill (2) circle[shift only,radius=2pt];
        \fill (3) circle[shift only,radius=2pt];
        \fill (1a) circle[shift only,radius=2pt];
        \fill (2a) circle[shift only,radius=2pt];
        \fill (3a) circle[shift only,radius=2pt];
        \fill (4) circle[shift only,radius=2pt];
        \fill (5) circle[shift only,radius=2pt];
        \fill (6) circle[shift only,radius=2pt];
        \fill (61) circle[shift only,radius=2pt];
        \fill (62) circle[shift only,radius=2pt];
        \fill (63) circle[shift only,radius=2pt];
        \fill (64) circle[shift only,radius=2pt];
        \fill (65) circle[shift only,radius=2pt];
        \fill ($(1)!0.333!(4)$) circle[shift only,radius=2pt];
        \fill ($(1)!0.666!(4)$) circle[shift only,radius=2pt];
        \fill ($(1)!0.333!(5)$) circle[shift only,radius=2pt];
        \fill ($(1)!0.666!(5)$) circle[shift only,radius=2pt];
        \fill ($(2)!0.333!(4)$) circle[shift only,radius=2pt];
        \fill ($(2)!0.666!(4)$) circle[shift only,radius=2pt];
        \fill ($(2)!0.333!(5)$) circle[shift only,radius=2pt];
        \fill ($(2)!0.666!(5)$) circle[shift only,radius=2pt];
        \fill ($(3)!0.333!(4)$) circle[shift only,radius=2pt];
        \fill ($(3)!0.666!(4)$) circle[shift only,radius=2pt];
        \fill ($(3)!0.333!(5)$) circle[shift only,radius=2pt];
        \fill ($(3)!0.666!(5)$) circle[shift only,radius=2pt];
        \fill ($(4)!0.333!(5)$) circle[shift only,radius=2pt];
        \fill ($(4)!0.666!(5)$) circle[shift only,radius=2pt];
\end{tikzpicture}
	\end{subfigure}
	\hfill
	\begin{subfigure}[b]{0.3\linewidth}
		\centering
		\begin{tikzpicture}[style=thick,xscale=0.85, yscale=0.6]

        \coordinate (1) at (0, 0);
        \coordinate (2) at (2, 0);
        \coordinate (3) at (1.5, 2);
        \coordinate (3b) at (1, 2.6);
        \coordinate (1a) at (0.6, 0.6);
        \coordinate (2a) at (1.4, 0.6);
        \coordinate (3a) at (1, 1.4);
        \coordinate (4) at (0, 4);
        \coordinate (4b) at (0.6, 3.4);
        \coordinate (5) at (2, 4);
        \coordinate (5b) at (1.4, 3.4);
        \coordinate (6) at (3, 2);
        \coordinate (61) at (3, 0.5);
        \coordinate (62) at (2.5, 1);
        \coordinate (63) at (2.5, 2);
        \coordinate (64) at (2.5, 3);
        \coordinate (65) at (3, 3.5);
        \draw (1a) -- (2a);
        \draw (1a) -- (3a);
        \draw (1) -- (4);
        \draw (1) -- (5);
        \draw (1) to [bend right=40] (61);
        \draw (61) -- (6);
        \draw (2a) -- (3a);
        \draw (2) -- (4);
        \draw (2) -- (5);
        \draw (2) -- (6);
        \draw (3b) -- (4b);
        \draw (3b) -- (5b);
        \draw (3) -- (6);
        \draw (4b) -- (5b);
        \draw (5) -- (6);
        \draw (4) to [bend left=40] (65);
        \draw (65) -- (6);
        \draw (1) -- (1a);
        \draw (2) -- (2a);
        \draw (3) -- (3a);
        \draw (3) -- (3b);
        \draw (4) -- (4b);
        \draw (5) -- (5b);
        \fill (1) circle[shift only,radius=2pt];
        \fill (2) circle[shift only,radius=2pt];
        \fill (3) circle[shift only,radius=2pt];
        \fill (1a) circle[shift only,radius=2pt];
        \fill (2a) circle[shift only,radius=2pt];
        \fill (3a) circle[shift only,radius=2pt];
        \fill (4) circle[shift only,radius=2pt];
        \fill (5) circle[shift only,radius=2pt];
        \fill (3b) circle[shift only,radius=2pt];
        \fill (4b) circle[shift only,radius=2pt];
        \fill (5b) circle[shift only,radius=2pt];
        \fill (6) circle[shift only,radius=2pt];
        \fill (61) circle[shift only,radius=2pt];
        \fill (62) circle[shift only,radius=2pt];
        \fill (63) circle[shift only,radius=2pt];
        \fill (64) circle[shift only,radius=2pt];
        \fill (65) circle[shift only,radius=2pt];
        \fill ($(1)!0.333!(4)$) circle[shift only,radius=2pt];
        \fill ($(1)!0.666!(4)$) circle[shift only,radius=2pt];
        \fill ($(1)!0.333!(5)$) circle[shift only,radius=2pt];
        \fill ($(1)!0.666!(5)$) circle[shift only,radius=2pt];
        \fill ($(2)!0.333!(4)$) circle[shift only,radius=2pt];
        \fill ($(2)!0.666!(4)$) circle[shift only,radius=2pt];
        \fill ($(2)!0.333!(5)$) circle[shift only,radius=2pt];
        \fill ($(2)!0.666!(5)$) circle[shift only,radius=2pt];
\end{tikzpicture}
	\end{subfigure}
	\caption{The pulling operation. Geodetic subdivision of $K_6$ (left), result of the first pulling operation (middle), graph after the second pulling operation (right).}
	\label{fig:k6-pull}
\end{figure}

\subsection{Observation on Regular Geodetic Graphs}

\observationregular*

The construction is as follows:
Take a finite projective plane of order $k$ (meaning that it consists of  $k^2 + k + 1$ points).
For each line in the plane, create a $(k+1)$-vertex complete subgraph.
Each of these vertices corresponds to one point on the line.
Furthermore, for each point in the plane, create a separate vertex.
Finally, for each line and each point on that line add an edge between the vertex of the complete subgraph corresponding to that line and point and the separate vertex corresponding to the same point.

Note that the construction in~\cite{bosak1978geodetic} does not rely on projective planes.
Instead it uses a clique edge cover of a complete graph.
In general, this leads to graphs that are not regular.
Here the finite projective plane of order $k$ comes to play:
it yields a covering of the $K_{k^2 + k + 1}$ by $k^2 + k + 1$ cliques of size $k + 1$.

\subsection{Source Code and Data}
\label{sec:source-code}

The source code and the list of biconnected geodetic graphs with up to 25 vertices available on \texttt{osf.io}: \url{https://osf.io/9nxzm/?view_only=276eb3107db24271a2083725af554317}.

\end{document}